\newsavebox{\tempbox}
\newcommand{\cbox}[2]{%
 \fcolorbox{black}{#1}{\texttt{#2\strut}}\kern-\fboxrule}
\newcommand{\smallspacing}{\baselineskip = 0.98\normalbaselineskip}
\newcommand{\grey}{!\Yfillcolor{gray!50}}
\newcommand{\blue}{!\Yfillcolor{blue!50}}
\newcommand{\white}{!\Yfillcolor{white}}
\newtheorem{example}{Example}
\newtheorem{theorem}{Theorem}
\newtheorem{lemma}{Lemma} 
\newtheorem{conjecture}{Conjecture} 
\def\AND{\wedge}
\def\OR{\vee}
\newcommand{\twozeros}{00}
\newcommand{\zone}{01}
\newcommand{\onezero}{10}
\newcommand{\eleven}{11}
\newcommand{\twelve}{12}
\newcommand{\thirteen}{13}
\newcommand{\fourteen}{14}
\newenvironment{proof-sketch}{\noindent{\bf Proof Sketch:}\hspace*{1em}}{\qed}
\long\def\greybox#1{%
    \newbox\contentbox%
    \newbox\bkgdbox%
    \setbox\contentbox\hbox to \hsize{%
        \vtop{
            \kern\columnsep
            \hbox to \hsize{%
                \kern\columnsep%
                \advance\hsize by -2\columnsep%
                \setlength{1.1 \textwidth}{\hsize}%
                \vbox{
                    \parskip=\baselineskip
                    \parindent=0bp
                    #1
                }%
                \kern\columnsep%
            }%
            \kern\columnsep%
        }%
    }%
    \setbox\bkgdbox\vbox{
        \pdfliteral{0.85 0.85 0.85 rg}
        \hrule width  \wd\contentbox %
               height \ht\contentbox %
               depth  \dp\contentbox
        \pdfliteral{0 0 0 rg}
    }%
    \wd\bkgdbox=0bp%
    \vbox{\hbox to \hsize{\box\bkgdbox\box\contentbox}}%
    \vskip\baselineskip%
}
\begin{document}

\pagestyle{empty}

\title{Partition into heapable sequences, heap tableaux and a multiset extension of Hammersley's process} 

\author{Gabriel Istrate, Cosmin  Bonchi\c{s}\footnote{Dept. of Computer Science, West University of Timi\c{s}oara,  Timi\c{s}oara, Romania. 
and e-Austria Research Institute, Bd. V. P\^{a}rvan 4, cam. 045
B, Timi\c{s}oara, RO-300223, Romania. Corresponding author's email: {\tt gabrielistrate@acm.org }}}

\date{}

\maketitle

\begin{abstract} 
We investigate partitioning of integer sequences into heapable subsequences (previously defined and established by Mitzenmacher et al.\\ \cite{byers2011heapable}). We show that an extension of patience sorting computes the decomposition into a minimal number of heapable subsequences (MHS). We connect this parameter to an interactive particle system, a multiset extension of Hammersley's process, and investigate its expected value  on a random permutation. In contrast with the (well studied) case of the longest increasing subsequence, we bring experimental evidence 
that the correct asymptotic scaling is $\frac{1+\sqrt{5}}{2}\cdot \ln(n)$. Finally we give a heap-based extension of Young tableaux, prove a hook inequality and an extension of the Robinson-Schensted correspondence. 
\end{abstract}

\section{Introduction}

{\it Patience sorting} \cite{mallows-patience} and {\it the longest increasing (LIS) sequence} are well-studied topics in combinatorics. The analysis  of the expected length of the LIS of a random permutation is a classical problem  displaying interesting connections with the theory of interacting particle systems \cite{aldous1999longest} and that of combinatorial Hopf algebras \cite{hivert2007introduction}.  Recursive versions of patience sorting are involved (under the name of {\it Schensted procedure} \cite{schensted}) in the theory of Young tableaux. A wonderful recent reference for the rich theory of the longest increasing sequences (and substantially more) is \cite{romik2014}. 

Recently  Mitzenmacher et al. \cite{byers2011heapable} introduced, under the name of {\it heapable sequence}, an interesting variation on the concept of increasing sequences. Informally, a sequence of integers is heapable if it can be successively inserted into a (not necessarily complete) binary tree satisfying the heap property without having to resort to node rearrangements. Mitzenmacher et al. showed that the longest heapable subsequence in a random permutation grows linearly (rather than asymptotically equal to $2\sqrt{n}$ as does LIS) and raised as an open question the issue of  extending the rich theory of LIS to the case of heapable sequences. 

In this paper we partly answer this open question: we define a family $MHS_{k}(X)$ of measures (based on decomposing the sequence into subsequences heapable into a min-heap of arity at most $k$) and show that a variant of patience sorting correctly computes the values of these parameters. We show that this family of measures forms an infinite hierarchy, and investigate the expected value of parameter $MHS_{2}[\pi]$, where $\pi$ is a random permutation of order $n$. Unlike the case $k=1$ where $E[MHS_{1}[\pi]]=E[LDS[\pi]]\sim 2\sqrt{n}$, we argue that in the case $k\geq 2$ the correct scaling is logarithmic, bringing experimental evidence that the precise scaling is $E[MHS_{2}[\pi]]\sim \phi\ln{n}$, where $\phi=\frac{1+\sqrt{5}}{2}$ is the golden ratio. The analysis exploits the connection with a new, multiset extension of the Hammersley-Aldous-Diaconis process \cite{aldous1995hammersley}, an extension that may be of independent interest. 
Finally, we introduce a heap-based generalization of Young tableaux. We prove (Theorem~\ref{unif} below) a hook inequality related to the hook formula for Young tableaux \cite{hook-formula} and Knuth's hook formula for heap-ordered trees \cite{knuth:vol3}, and (Theorem~\ref{rs-heap}) an extension of the Robinson-Schensted (R-S) correspondence. 

\section{Preliminaries}

For $k\geq1$ define alphabet $\Sigma_{k}=\{1,2,\ldots, k\}$. Define 
as well $\Sigma_{\infty}=\cup_{k\geq 1} \Sigma_{k}$. Given words $x,y$ over $\Sigma_{\infty}$ we will denote by $x\sqsubseteq y$ the fact that $x$ is a prefix of $y$. The set of (non-strict) prefixes of $x$ will be denoted by $Pref(x)$. Given words $x,y\in \Sigma_{\infty}^{*}$ define the {\it prefix partial order} $x\preceq_{ppo} y$ as follows: If $x\sqsubseteq y$ then $x\preceq_{ppo} y$. If $x=za$, $y=zb$, $a,b\in \Sigma_{\infty}$ and $a<b$ then $x\preceq_{ppo} y$. $\preceq_{ppo}$ is the transitive closure of these two constraints. Similarly, the {\it lexicographic partial order} $\preceq_{lex}$ is defined as follows: If $x\sqsubset y$ then $x\preceq_{lex} y$. If $x=za$, $y=zb$, $a,b\in \Sigma_{\infty}$ and $a<b$ then $x\preceq_{lex} y$. $\preceq_{lex}$ is the transitive closure of these two constraints.

A {\it $k$-ary tree} is a finite, $\preceq_{ppo}$-closed set $T$ of words over alphabet $\Sigma_{k}=\{1,2,\ldots, k\}$. That is, 
we impose the condition that positions on the same level in a tree are filled preferentially from left to right. 
The {\it position $pos(x)$ of node $x$ in a $k$-ary tree} is the string over alphabet $\{1,2,\ldots, k\}$ encoding the path from the root to the node (e.g. the root has position $\lambda$, its children have positions $1,2,\ldots, k$, and so on). 
A {\it $k$-ary (min)-heap} is a function $f:T\rightarrow {\bf N}$ monotone with respect to $pos$, i.e. $
(\forall x,y\in T), [pos(x)\sqsubseteq pos(y)] \Rightarrow [f(x)\leq f(y)].$

A {\it (binary min-)heap} is a binary tree, not necessarily complete, such that $A[parent[x]]\leq A[x]$ for every non-root node $x$. If instead of binary we require the tree to be $k$-ary we get the concept of $k$-ary min-heap.

 A  sequence $X=X_{0},\ldots, X_{n-1}$ is {\it $k$-heapable} if there exists some $k$-ary tree $T$ whose nodes are 
 labeled with (exactly one of) the elements of $X$, such that for every non-root node $X_{i}$ and parent $X_{j}$, $X_{j}\leq X_{i}$ and $j<i$. In particular a 2-heapable sequence will simply be called {\it heapable}  \cite{byers2011heapable}. Given sequence of integer numbers $X$, denote by $MHS_{k}(X)$ {\it the smallest number of heapable (not necessarily contiguous) subsequences} one can decompose $X$ into. $MHS_{1}(X)$ is equal \cite{levcopoulos-petersson-sus} to the {\it shuffled up-sequences (SUS)} measure in the theory of presortedness. 

\begin{example}
Let $X=[2,4,3,1]$. Via patience sorting  $MHS_{1}(X)=SUS(X)$ $=3$. $MHS_{2}(X)=2$, since subsequnces $[2,4,3]$ and $[1]$ are 2-heapable. On the other hand, for every $k\geq 1$, $MHS_{k}([k,k-1,\ldots, 1])=k$. 
\end{example} 

Analyzing the behavior of LIS relies on the correspondence between longest increasing sequences and an interactive particle system \cite{aldous1995hammersley} called the  {\it Hammersley-Aldous-Diaconis (shortly, Hammersley or HAD) process}. We give it the  multiset generalization displayed in Figure~\ref{ham:k}. Technically, to recover the usual definition of Hammersley's process one should take $X_{a}>X_{t+1}$ (rather than $X_{a}<X_{t+1}$). This small difference arises since we want to capture $MHS_{k}(\pi)$, which generalizes $LDS(\pi)$, rather than $LIS(\pi)$ (captured by Hammersley's process). This slight difference is, of course, inconsequential: our definition is simply the "flipped around the midpoint of segment [0,1]" version of such a generalization, and has similar behaviour).

\begin{figure}
\begin{framed} 
\begin{itemize} 
\item A number of individuals appear (at integer times $i\geq 1$) as random numbers $X_{i}$, uniformly distributed in the interval $[0,1]$. 
\item Each individual is initially endowed with $k$ "lifelines". 
\item The appearance of a new individual $X_{t+1}$ subtracts a life from the largest individual $X_{a}<X_{t+1}$ (if any) still alive at moment $t$. 

\end{itemize}
\end{framed} 

\caption{$HAD_{k}$, the multiset Hammersley process with $k$ lifelines.}
\label{ham:k}
\end{figure}

\section{A greedy approach to computing $MHS_{k}$}

First we show that one can combine patience sorting and the greedy approach in  \cite{byers2011heapable} to 
obtain an algorithm for computing $MHS_{k}(X)$. To do so, we must adapt to our purposes some notation in that paper. 

A binary tree with $n$ nodes has $n+1$ positions (that will be called {\it slots}) where one can add a new number. We will identify a slot with the minimal value of a number that can be added to that location. For heap-ordered trees it is the value of the parent node. Slots easily generalize to forests. The number of slots of a forest with $d$ trees and $n$ nodes is $n+d$. 

Given a binary heap forest $T$, the {\it signature of $T$} denoted $sig(T)$, is the vector of the (values of) free slots in $T$, in sorted (non-decreasing) order. Given two binary heap forests $T_{1},T_{2}$, {\it $T_{1}$ dominates $T_{2}$} if $|sig_{T_{1}}|\leq |sig_{T_{2}}|$ and inequality $sig_{T_{1}}[i]\leq sig_{T_{2}}[i]$ holds for all $1\leq i \leq |sig_{T_{1}}|$.

\begin{theorem} 
For every fixed $k\geq 1$ there is a polynomial time algorithm that, given sequence $X=(X_{0},\ldots, X_{n-1})$ as input, computes $MHS_{k}(X)$. 
\end{theorem}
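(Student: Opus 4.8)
The plan is to prove that a single greedy sweep, generalizing patience sorting through the slot/signature machinery introduced above, computes $MHS_{k}(X)$ exactly. I would maintain a $k$-ary heap forest and process $X_{0},\ldots,X_{n-1}$ in order; when $X_{i}$ arrives I place it into the free slot of \emph{largest} value that is still $\le X_{i}$ (the feasible slot closest to $X_{i}$ from below), which deletes that slot from $sig$ and inserts $k$ fresh slots of value $X_{i}$, namely the child positions of the new node. A new tree rooted at $X_{i}$ is opened only when $X_{i}$ is smaller than every current slot, i.e.\ when no feasible slot exists. The output is the number $g$ of trees opened. Since parents are always placed before their children, each tree is a legal $k$-ary min-heap whose node order respects the arrival order, so the forest exhibits an explicit partition of $X$ into $k$-heapable subsequences; consequently $MHS_{k}(X)\le g$, and the whole content lies in the reverse inequality $g\le MHS_{k}(X)$.

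For optimality I would fix, once and for all, an optimal decomposition of $X$ into $m=MHS_{k}(X)$ heapable subsequences, and read it as a reference forest $R_{i}$ obtained by inserting $X_{0},\ldots,X_{i-1}$ into their designated trees in this decomposition. The core is the invariant, proved by induction on $i$, that greedy's forest $G_{i}$ \emph{dominates} $R_{i}$: $|sig(G_{i})|\le|sig(R_{i})|$ and $sig(G_{i})[j]\le sig(R_{i})[j]$ for every $j$. Granting the invariant at $i=n$ gives $|sig(G_{n})|\le|sig(R_{n})|$; since for a fixed number of nodes the number of slots equals $(k-1)n$ plus the number of trees, hence is increasing in the tree count, this forces greedy to use at most $m$ trees, yielding $g\le m=MHS_{k}(X)$ and therefore equality.

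The inductive step is an exchange argument and is where the real work sits. Since adding the same $k$ copies of $X_{i}$ to two sorted signatures preserves domination (a routine interleaving fact), the difficulty lies entirely in the single slot each forest \emph{deletes}: greedy deletes the largest slot $g_{a}\le X_{i}$, whereas the reference forest deletes the parent-slot of $X_{i}$ in its designated tree, or deletes nothing when $X_{i}$ is a root of the decomposition. First note that greedy is never forced to open a tree when the reference does not: if the reference can insert $X_{i}$ it has a slot $\le X_{i}$, and by domination greedy has an even smaller one. The easy case is when both forests open a tree with $X_{i}$ below all their slots, so the $k$ new copies slide to the front of both signatures. The main obstacle is the asymmetric case in which greedy inserts into an existing slot while the reference opens a new tree (greedy, having pointwise smaller slots, can be strictly thriftier): deleting $g_{a}$ from the \emph{dominating} side shifts the larger slots of $G_{i}$ to earlier sorted positions and so threatens the pointwise inequalities. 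I expect resolving this to require the structural constraints on genuine heap-forest signatures—for instance, that every leaf contributes $k$ equal slots and that each node's slot-values dominate those of its parent—which rule out the pathological vector configurations on which a purely formal domination argument breaks, together with the fact that greedy always removes the feasible slot nearest $X_{i}$, so that the left-shift it induces is always covered by the corresponding shift on the reference side.

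Finally, the running time is polynomial for fixed $k$: storing $sig$ in a balanced search tree, each step performs one predecessor query together with $O(k)$ insertions and one deletion, for a total of $O(k\,n\log n)$, while the tree counter is updated in $O(1)$ per step; this yields the claimed polynomial-time computation of $MHS_{k}(X)$.
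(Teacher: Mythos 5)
Your algorithm and proof plan coincide with the paper's own: the same greedy rule (largest feasible slot, new tree only when forced), the same notion of signature domination, and the same step\-wise induction against a fixed optimal decomposition (the paper's Lemma~\ref{dom}); your way of finishing via the slot count $(k-1)n+\#\{\text{trees}\}$ is a cosmetic variant of the paper's conclusion. The problem is precisely the step you leave open, and it is not a verification that structural properties of heap forests can rescue: against an \emph{arbitrarily} fixed optimal decomposition the domination invariant is simply false. Take $k=2$ and $X=(1,5,4,3,2)$. Here $MHS_2(X)=3$, since each of $5,4,3,2$ can only be a child of $1$ or a root, and $1$ has only two slots; one optimal decomposition is $\{1,3,2\},\{5\},\{4\}$. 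Fix that decomposition as your reference. After processing $1,5$, greedy's forest is the single tree with $5$ under $1$, with signature $(1,5,5)$, while the reference forest consists of the two roots $1$ and $5$, with signature $(1,1,5,5)$; sorted pointwise domination fails in the second coordinate, $5>1$. Both forests are genuine heap forests, so the constraints you hope to invoke (each leaf contributes $k$ equal slots, children's slot values dominate the parent's) are all satisfied and cannot rule this out. The same example shows that the paper's own proof of Lemma~\ref{dom} is incomplete at exactly this point: its case analysis tacitly assumes that the reference insertion also deletes a slot, so the asymmetric case you flagged is never addressed there either; you have correctly located the soft spot, but locating it is not resolving it.

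Closing the gap requires a new idea, not a finer case analysis of signatures. One repair is to first prove, by a grafting/exchange argument on decompositions, that \emph{some} optimal decomposition never opens a new heap while it still has a feasible slot, and to run the induction only against that reference (even then, the subcase where the reference is forced to open a heap while greedy is not needs separate care, since pointwise domination can again degrade). A cleaner route abandons signatures altogether: induct on $i$ with the invariant that greedy's forest on $X_0,\ldots,X_{i-1}$ extends to an optimal decomposition $D$ of all of $X$. If greedy places $X_i$ in slot $s$ of value $a$ while $D$ places it at slot $s'$ of value $b$ or as a new root, then, because $D$ and greedy agree on the first $i$ elements, $s'$ is also a slot of greedy's forest, so $b\le a$; any element $y$ that $D$ later hangs at $s$ satisfies $y\ge a\ge b$, hence swapping in $D$ the subtree rooted at $X_i$ with the subtree rooted at $y$ (or making $y$ a root, if $X_i$ was one) is legal, preserves the number of trees, and restores agreement through step $i+1$. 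Without such a repair, your proof, as written, does not go through.
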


\begin{proof} 

We use the greedy approach of Algorithm~\ref{fig:greedy}. Proving  correctness of the algorithm employs the following

\begin{center}
\begin{pseudocode}[doublebox]{GREEDY}{W}
\mbox{INPUT } W=(w_{1},w_{2},\ldots, w_{n})\mbox{ a list of integers.} \\
\mbox{Start with empty heap forest }T=\emptyset. \\
\mbox{for }i\mbox{ in range(n):} \\
	\hspace{5mm}\IF \mbox{(there exists a slot where }X_{i}\mbox{ can be inserted):}\\
	\hspace{20mm}\mbox{ insert }X_{i}\mbox{ in the slot with the lowest value }
	\hspace{20mm} \ELSE :\\
	\hspace{20mm}\mbox{ start a new heap consisting of }X_{i}\mbox{ only.} 
\label{fig:greedy}
\end{pseudocode}
\end{center}

\begin{lemma} 
Let $T_{1},T_{2}$ be two heap forests  such that $T_{1}$ dominates $T_{2}$. Insert a new element $x$ in both $T_{1}$ and $T_{2}$: greedily in $T_{1}$ (i.e. at the largest slot with value less or equal to $x$, or as the root of a new tree, if no such slot exists) and  arbitrarily in $T_{2}$, obtaining forests $T_{1}^{\prime},T_{2}^{\prime}$, respectively. 
Then $T_{1}^{\prime}$ dominates $T_{2}^{\prime}$.
\label{dom}
\end{lemma} 
\begin{proof} 
First note that, by domination, if no slot of $T_{1}$ can accomodate $x$ (which, thus, starts a new tree) then a similar property is true in $T_{2}$ (and thus $x$ starts a new tree in $T_{2}$ as well). 

Let $sig_{T_{1}}=(a_{1},a_{2},\ldots )$ and $sig_{T_{2}}=(b_{1},b_{2},\ldots )$ be the two signatures.
The process of inserting $x$ can be described as adding two copies of $x$ to the signature of $T_{1}(T_{2})$ and (perhaps) removing a label $\leq x$ from the two signatures. The removed label is $a_{i}$, the largest label $\leq x$, in the case of greedy insertion into $T_{1}$. Let $b_j$ be the largest value (or possibly none) in $T_{2}$ less or equal to $x.$ Some $b_k$ less or equal to $b_j$ is replaced by two copies of $x$ in $T_2.$ The following are true: 
\begin{itemize} 
\item The length of $sig_{T_{1}^{\prime}}$ is at most that of $sig_{T_{2}^{\prime}}$.
\item The element $b_k$ (if any) deleted by $x$ from $T_{2}$ satisfies $b_k\leq x$. Its index in $T_2$ is less or equal to $i$. 
\item The two $x$'s are inserted to the left of the deleted  (if any) positions in both $T_{1}$ and $T_{2}$. 
\end{itemize}

Consider some position $l$ in $sig_{T_{1}^{\prime}}$. Our goal is to show that $a^{\prime}_{l}\leq b^{\prime}_{l}$. Several cases are possible: 
\begin{itemize} 
\item $l < k$. Then $a^{\prime}_{l}= a_{l}$ and $b^{\prime}_{l}= b_{l}$. 
\item $k \leq l < j$. Then $a^{\prime}_{l}= a_{l}$ and $b^{\prime}_{l}= b_{l+1} \geq a_{l}$. 
\item $j\leq l\leq i+k-1$. Then $a^{\prime}_{l}\leq x$ and $b^{\prime}_{l}\geq x$. 
\item $l > i+k-1$. Then $a^{\prime}_{l} = a_{l-k+1}$ and $b^{\prime}_{l}= b_{l-k+1}$. 
\end{itemize}

\begin{figure}
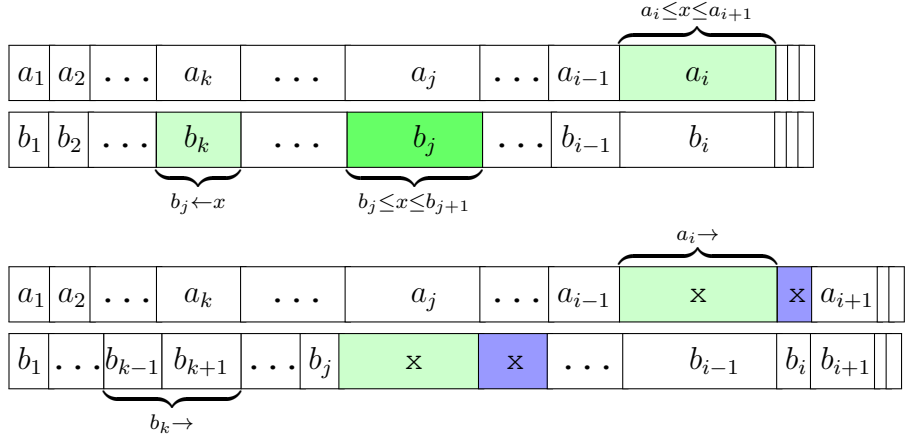
 
\begin{eqnarray*}
&& \cbox{white}{$a_1$}\!
  \cbox{white}{$a_2$}\!
  \cbox{white}{\ldots\hspace{-0.03cm}}\!
  \cbox{white}{~$a_k$~\hspace{0.06cm}}\!
  \cbox{white}{~\ldots~\hspace{-0.03cm}}\!
  \cbox{white}{~~~$a_j$~~}\!
  \cbox{white}{\ldots}\!
  \cbox{white}{$a_{i-1}$}\!
  \overbrace{\cbox{green!20}{~~~$a_i$~~~}}^{a_{i}\leq x\leq a_{i+1}}\!
  \cbox{white}{}\!
  \cbox{white}{}\!
  \cbox{white}{\hspace{-0.02cm}}\\
&& \cbox{white}{$b_1$\hspace{0.03cm}}\!
  \cbox{white}{$b_2$\hspace{0.03cm}}\!
  \cbox{white}{\ldots\hspace{-0.08cm}}\!
  \underbrace{\cbox{green!20}{~$b_k$~\hspace{0.04cm}}}_{b_j \leftarrow x}\!
  \cbox{white}{~\ldots~\hspace{-0.08cm}}\!
  \underbrace{\cbox{green!60}{~~~$b_j$~~\hspace{-0.02cm}}}_{b_j \leq x \leq b_{j+1}}\!
  \cbox{white}{\hspace{0.02cm}\ldots\hspace{-0.02cm}}\!
  \cbox{white}{\hspace{0.02cm}$b_{i-1}$\hspace{0.06cm}}\!
  \cbox{white}{\hspace{0.04cm}~~~$b_i$~~~\hspace{0.05cm}}\!
  \cbox{white}{}\!
  \cbox{white}{}\!
  \cbox{white}{\hspace{-0.02cm}}\! \\
\vspace{-0.6cm}
&& \cbox{white}{$a_1$}\!
  \cbox{white}{$a_2$}\!
  \cbox{white}{\ldots\hspace{-0.03cm}}\!
  \cbox{white}{~$a_k$~\hspace{0.06cm}}\!
  \cbox{white}{~\ldots~\hspace{-0.03cm}}\!
  \cbox{white}{~~~$a_j$~~}\!
  \cbox{white}{\ldots}\!
  \cbox{white}{$a_{i-1}$}\!
  \overbrace{\cbox{green!20}{\hspace{0.05cm}~~~x~~~\hspace{0.05cm}}}^{a_i\rightarrow}\!
  \cbox{blue!40}{\hspace{0.03cm}x\hspace{0.02cm}}\!
  \cbox{white}{$a_{i+1}$}\!
  \cbox{white}{}\!
  \cbox{white}{\hspace{-0.02cm}}\\
&& \cbox{white}{$b_1$\hspace{0.03cm}}\!
  \cbox{white}{\hspace{-0.1cm}\ldots\hspace{-0.15cm}}\!
  \underbrace{\cbox{white}{\hspace{-0.1cm}$b_{k-1}$\hspace{-0.09cm}} \cbox{white}{\hspace{0.04cm}$b_{k+1}$\hspace{0.05cm}}}_{b_k \rightarrow }\!
  \cbox{white}{\hspace{-0.07cm}\ldots\hspace{-0.06cm}}\!
  \cbox{white}{$b_j$\hspace{0.03cm}}\!     
  \cbox{green!20}{~~~x~~~\hspace{-0.07cm}}\!
  \cbox{blue!40}{~x~}\!
  \cbox{white}{\hspace{0.04cm}\ldots\hspace{0.05cm}}\!
  \cbox{white}{\hspace{0.01cm}~~~$b_{i-1}$~~\hspace{-0.06cm}}\!
  \cbox{white}{$b_i$}\!
  \cbox{white}{\hspace{0.02cm}$b_{i+1}$}\!
  \cbox{white}{}\!
  \cbox{white}{\hspace{-0.02cm}}\\
\end{eqnarray*}
\caption{The argument of Lemma~\ref{dom}. Pictured vectors (both initial and resulting) have equal lengths (which may not always be the case).}
\end{figure} 
\end{proof}
\qed

Let $X$ be a sequence of integers, OPT be an optimal partition of $X$ into $k$-heapable sequences  and $\Gamma$ be the solution produced by GREEDY. 
Applying Lemma~\ref{dom} repeatedly we infer that whenever GREEDY adds a new heap the same thing happens in OPT. Thus the number of heaps created by Greedy is optimal, which means that the algorithm computes $MHS_{k}(X)$. \qed 
\end{proof}

Trivially $MHS_{k}(X)\le MHS_{k-1}(X)$. On the other hand

\begin{theorem} \label{ineq}
 The following statements (proved in the Appendix) are true for every $k\geq 2$: (a). there exists a sequence $X$ such that $MHS_{k}(X)<MHS_{k-1}(X)$ $<\ldots <MHS_{1}(X)$; (b). $
\sup\limits_{X}\mbox{ } [MHS_{k-1}(X)-MHS_{k}(X)] =\infty$. 
\end{theorem}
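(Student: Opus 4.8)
The plan is to build everything out of a single elementary gadget. For $m\ge 1$ let
\[
X_m=(0,\,m,\,m-1,\,\ldots,\,2,\,1),
\]
a small value followed by the run $m,m-1,\ldots,1$ in strictly decreasing order. I would first establish the exact formula $MHS_{j}(X_m)=m-j+1$ for all $1\le j\le m$. The key structural observation is that the values $m,m-1,\ldots,1$ are pairwise \emph{incomparable} for the heap relation: inside any subsequence a later element is strictly smaller than an earlier one, so neither can be an ancestor of the other (a child must be $\ge$ its parent and appear after it). Consequently the only element that can serve as a genuine internal node is $0$, and since $0$ admits at most $j$ children, none of which can in turn acquire children, any $j$-heapable subsequence containing $0$ covers at most $j$ of the values $\{1,\ldots,m\}$, while every subsequence avoiding $0$ is a strictly decreasing run and hence a singleton. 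Counting then gives the lower bound: with $p$ subsequences at most one covers more than one value and it covers at most $j$, so $m\le j+(p-1)$, i.e. $p\ge m-j+1$; the matching construction (one star rooted at $0$ with $j$ leaves, plus $m-j$ singletons) proves equality.

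Part (a) is then immediate by taking $m=k$: here $MHS_{j}(X_k)=k-j+1$ for $1\le j\le k$, which is strictly decreasing in $j$, so $MHS_{k}(X_k)=1<MHS_{k-1}(X_k)=2<\cdots<MHS_{1}(X_k)=k$, giving the required chain of strict inequalities.

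For part (b) the plan is to concatenate $N$ shifted copies of the gadget in strictly decreasing value ranges and prove additivity. Let $G_1,\ldots,G_N$ be copies of $X_k$ shifted by disjoint constants so that $\min(G_i)>\max(G_{i+1})$ for every $i$ (e.g. shift $G_i$ by $(N-i)(k+1)$), and set $X=G_1G_2\cdots G_N$. I would prove the additivity lemma $MHS_{j}(X)=\sum_{i=1}^{N}MHS_{j}(G_i)=N\,(k-j+1)$, whence $MHS_{k-1}(X)-MHS_{k}(X)=N$ and the supremum in (b) is infinite. The heart of the argument, and the step I expect to be the main obstacle, is showing that no single $j$-heapable subsequence can meet two distinct gadgets. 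This follows because the root of such a subsequence is simultaneously its minimum and its earliest element: if the subsequence touched $G_i$ and $G_j$ with $i<j$, its earliest element would lie in some $G_{i'}$ with $i'\le i$, forcing the minimum of the subsequence to be at least $\min(G_{i'})\ge\min(G_i)>\max(G_j)$, which contradicts the presence of a (strictly smaller) element of $G_j$. Once this ``no subsequence crosses a gadget boundary'' claim is in place, every partition of $X$ into $j$-heapable subsequences restricts to independent partitions of the individual $G_i$ (and conversely optimal partitions of the $G_i$ paste together), additivity follows, and letting $N\to\infty$ completes the proof.
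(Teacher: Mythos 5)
Your proof is correct, and for part (b) it takes a genuinely different route from the paper. For part (a) you essentially rediscover the paper's own gadget: the paper takes $X=[1,k+1,k,k-1,\ldots,2]$, which is your $X_k$ up to a relabeling of values; where you give a direct counting lower bound (the decreasing run is pairwise incomparable for the heap relation, so only the head element can be internal), the paper merely exhibits the greedy decompositions and invokes the already-proved optimality of GREEDY. For part (b) the paper builds a single nested sequence $X^{(k,n)}$ of $n$ blocks of geometrically growing sizes, each block's values exceeding the previous block's, so that the whole sequence is $k$-heapable, $MHS_{k}(X^{(k,n)})=1$, while GREEDY with arity $k-1$ is forced to open a new heap at every block, giving $MHS_{k-1}(X^{(k,n)})=n+1$. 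You instead concatenate $N$ copies of the part-(a) gadget in strictly decreasing value ranges and prove an additivity lemma: since the root of a heapable subsequence is simultaneously its earliest and its minimal element, no subsequence can cross a boundary between blocks with decreasing value ranges, hence $MHS_{j}$ is additive over the blocks; your boundary argument is sound. Your approach is more elementary and self-contained (it never relies on the correctness of GREEDY, which the paper's appendix uses in both parts), and the additivity lemma is a clean, reusable principle. What the paper's construction buys in exchange is a stronger separation: its witness satisfies $MHS_{k}=1$, so the gap — indeed even the ratio $MHS_{k-1}/MHS_{k}$ — is unbounded already on fully $k$-heapable sequences, whereas in your family both parameters grow linearly in $N$ and the ratio stays at $2$. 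For the theorem as stated, namely $\sup_{X}[MHS_{k-1}(X)-MHS_{k}(X)]=\infty$, both arguments suffice.
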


\section{The connection with the multiset Hammersley process} 

Denote by $MinHAD_{k}(n)$ the random variable denoting {\it the number of times $i$ in the evolution of process $HAD_{k}$ up to time $n$ when the newly inserted particle $X_{i}$ has lower value than all the existing particles at time $i$.}
The observation from \cite{hammersley1972few,aldous1995hammersley} generalizes to: 
\begin{theorem} 
For every fixed $k,n\geq 1$ $E_{\pi\in S_{n}} [MHS_{k}(\pi)]=E[MinHAD_{k}(n)]$. 
\end{theorem}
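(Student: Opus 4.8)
The plan is to prove the stronger \emph{pathwise} statement: under a natural coupling of the two processes by a common input, $MHS_{k}(\pi)$ and $MinHAD_{k}(n)$ are equal as random variables, whence equality of expectations is immediate. First I would observe that a uniformly random $\pi\in S_{n}$ has the same law as the sequence of ranks of $n$ i.i.d.\ uniform $[0,1]$ variables $X_{1},\ldots,X_{n}$, and that both quantities depend only on the relative order of their inputs (all comparisons are strict, since the $X_{i}$ are almost surely distinct). Thus it suffices to fix one realization $X_{1},\ldots,X_{n}$ and show that the number of heaps created by GREEDY on this input equals the number of ``Min'' steps of $HAD_{k}$ on the same input. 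Recall that GREEDY computes $MHS_{k}$ (as shown above) and that the value it returns is exactly the number of times it executes the \textbf{else} branch, i.e.\ the number of incoming elements admitting no free slot of value $\le X_{i}$.

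The core of the argument is a single invariant maintained along the joint run. Comparing the forest built by GREEDY after reading $X_{1},\ldots,X_{i}$ with the configuration of $HAD_{k}$ at time $i$, I would prove by induction on $i$ that
\[
\{\text{values of free slots in the GREEDY forest}\}
\;=\;
\{\,X_{a}\text{ with multiplicity }r_{a} : X_{a}\text{ alive}\,\}
\]
as \emph{multisets}, where $r_{a}\ge 1$ is the number of lifelines of $X_{a}$ still remaining. The point is that in a $k$-ary heap forest every node $X_{a}$ offers exactly $k$ child-slots, each carrying threshold value $X_{a}$; a slot is consumed precisely when a child is attached below $X_{a}$, which is exactly the event that $X_{a}$ loses one lifeline in $HAD_{k}$. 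Hence the number of free slots of value $X_{a}$ equals the number of surviving lifelines $r_{a}$, and a node with no free slots (a full node) corresponds to a dead particle (zero lifelines), correctly excluded from both sides.

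For the inductive step, when $X_{i+1}$ arrives I would match the two update rules. Because values are distinct, the largest free-slot value that is $\le X_{i+1}$ equals the largest value of an alive particle that is $<X_{i+1}$, and by the invariant these are realized by the \emph{same} node/particle $X_{a}$. Following the greedy rule of Lemma~\ref{dom}, GREEDY attaches $X_{i+1}$ below $X_{a}$, deleting one slot of value $X_{a}$ and creating $k$ fresh slots of value $X_{i+1}$; dually, $HAD_{k}$ removes one lifeline from $X_{a}$ and endows $X_{i+1}$ with $k$ lifelines. These operations transform the two multisets identically, so the invariant is preserved. Moreover GREEDY starts a new heap (an \textbf{else} step) iff no free slot has value $\le X_{i+1}$, which by the invariant holds iff no alive particle has value $<X_{i+1}$, i.e.\ iff $X_{i+1}$ is smaller than all particles alive at time $i+1$ --- precisely a ``Min'' step of $HAD_{k}$. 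Summing over $i$, the number of heaps created by GREEDY equals $MinHAD_{k}$ on this realization, so $MHS_{k}(\pi)=MinHAD_{k}(n)$ pathwise; taking expectations over the uniform input yields the claim, the multiset analogue of the Hammersley--Aldous--Diaconis observation.

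The step I expect to be the main obstacle is verifying the invariant cleanly --- in particular, confirming that GREEDY's ``largest free slot of value $\le X_{i+1}$'' selects the \emph{same} node that $HAD_{k}$'s ``largest alive particle $<X_{i+1}$'' does, and handling the bookkeeping of full nodes versus dead particles so that the correspondence between free slots and remaining lifelines stays exact at every step.
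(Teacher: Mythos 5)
Your proposal is correct and follows essentially the same route as the paper, whose own argument is only a brief sketch stating that (w.h.p. all values are distinct and) ``minima correspond to new heaps and live particles to slots in these heaps''. Your pathwise coupling, with the multiset invariant identifying free-slot values with surviving lifelines and matching GREEDY's ``largest slot $\le X_{i+1}$'' rule to $HAD_{k}$'s ``largest alive particle $< X_{i+1}$'' rule, is precisely the detailed version of that correspondence (cf. Lemma~\ref{dom}), carried out correctly.
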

\begin{proof-sketch}
W.h.p. all $X_{i}$'s are different. We will thus ignore in the sequel the opposite alternative. Informally minima correspond to new heaps and live particles to slots in these heaps (cf. also Lemma~\ref{dom}). 
\end{proof-sketch}

\section{The asymptotic behavior of $E[MHS_{2}[\pi]]$}

The asymptotic behavior of $E[MHS_{1}[\pi]]$ where $\pi$ is a random permutation in $S_{n}$ is a classical problem in probability theory: results in \cite{hammersley1972few}, \cite{logan1977variational}, \cite{vershik1977asymptotics}, \cite{aldous1995hammersley} show that it is asymptotically equal to $2\sqrt{n}$. 

A simple lower bound valid for all values of $k\geq 1$ is 

\begin{theorem}
For every fixed $k,n\geq 1$ 
\begin{equation} 
E_{\pi\in S_{n}} [MHS_{k}(\pi)]\geq H_{n}\mbox{, the }n\mbox{'th harmonic number.    }
\end{equation} 
\end{theorem}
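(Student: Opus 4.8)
The plan is to bound $MHS_k(\pi)$ from below, pointwise, by the number of left-to-right minima of $\pi$, and then invoke the classical computation of the expected number of such minima. First I would recall that, by the correctness of GREEDY established above, $MHS_k(X)$ equals the number of heaps that GREEDY creates on input $X$. GREEDY starts a new heap at step $i$ precisely when $X_i$ fits into no existing slot, i.e. when $X_i$ is strictly smaller than every slot value present at that moment. The key structural observation is that every slot is a child position of some node already inserted, so each slot value equals $X_j$ for some $j<i$ (for a heap-ordered forest the value of a slot is exactly the value of its would-be parent, and a freshly inserted node contributes only child-slots). Consequently, if $X_i$ is a \emph{left-to-right minimum}, meaning $X_i<X_j$ for all $j<i$, then $X_i$ is strictly smaller than all slot values and GREEDY is forced into the \texttt{else} branch, creating a new heap.

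This yields the pointwise inequality $MHS_k(X)\geq L(X)$, where $L(X)$ denotes the number of left-to-right minima of $X$ (with $X_0$ always counted). Taking $X=\pi$ a uniformly random permutation and applying linearity of expectation, I would then use the standard fact that position $i$ is a left-to-right minimum with probability $1/i$ (by symmetry among the first $i$ entries), so that $E_{\pi\in S_{n}}[L(\pi)]=\sum_{i=1}^{n}\frac{1}{i}=H_{n}$. Combining the two bounds gives $E_{\pi\in S_{n}}[MHS_k(\pi)]\geq H_{n}$, as claimed. Note that the bound is uniform in $k$ because the argument never uses the arity: a running minimum can never be attached below an earlier element, no matter how many children each node is permitted to host.

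There is essentially no deep obstacle here; the only point requiring care is the justification that all slot values are values of previously inserted elements, so that a left-to-right minimum genuinely excludes every slot. This follows directly from the definition of slots as child positions in a min-heap forest. Alternatively, the same bound can be read off the Hammersley correspondence proved above: since $E[MHS_k(\pi)]=E[MinHAD_k(n)]$ and every genuine running minimum of the input is in particular smaller than all still-living particles, $MinHAD_k(n)$ dominates the number of left-to-right minima, again producing expectation at least $H_{n}$.
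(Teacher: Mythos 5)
Your proof is correct and takes essentially the same approach as the paper: both arguments bound $MHS_{k}(\pi)$ pointwise from below by the number of left-to-right minima of $\pi$ and then use the classical fact that this count has expectation $H_{n}$. The only cosmetic difference is that you justify the pointwise bound by invoking the correctness of GREEDY (a heavier tool), whereas the paper observes directly that in \emph{any} decomposition into heapable subsequences a running minimum can have no admissible parent and hence must be the root of a new heap, no matter what $k$ is.
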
 
\begin{proof}
For $\pi\in S_{n}$ the set of its {\it minima} is defined as $Min(\pi)=\{j\in [n]: \pi[j]<\pi[i]\mbox{ for all } 1\leq i<j\}$ (and similarly for maxima). It is easy to see that $MHS_{k}[\pi]\geq |Min[\pi]|$. Indeed, every minimum of $\pi$ must determine the starting of a new heap, no matter what $k$ is. 
Now we use the well-known formula $E_{\pi\in S_{n}}[|Min[\pi]|]=E_{\pi\in S_{n}}[|Max[\pi]|]=H_{n}$ \cite{knuth:vol3}. \end{proof}\qed

To gain insight in the behavior of process $HAD_{2}$ we note that, rather than giving the precise values of $X_{0},X_{1},\ldots, X_{t}\in [0,1]$, an equivalent random model inserts $X_{t}$ uniformly at random in any of the $t+1$ possible positions determined by $X_{0},X_{1},\ldots, X_{t-1}$. This model translates into the following equivalent  combinatorial description of $HAD_{k}$: word $w_{t}$ over the alphabet $\{-1,0,1,2\}$ describes the state of the process at time $t$.  Each $w_{t}$ conventionally starts with a $-1$ and continues with a sequence of $0$, $1$'s and $2$'s,  informally the "number of lifelines" of particles at time $t$. For instance $w_{0}=0$, $w_{1}=02$, $w_{2}$ is either $022$ or $012$, depending on $X_{0} <> X_{1}$, and so on. At each time $t$ a random letter of $w_{t}$ is chosen (corresponding to a position for $X_{t}$) and we apply one of the following transformations, the appropriate one for the chosen position: 
\begin{itemize} 
\item {\it Replacing $-10^{r}$  by $-10^{r}2$:} This is the case when $X_{t}$ is the smallest particle still alive, and to its right there are $r\geq 0$ dead particles.
\item {\it Replacing $10^{r}$ by $0^{r+1}2$:} Suppose that $X_{a}$ is the largest live label less or equal to $X_t$, that the corresponding particle $X_{a}$ has one lifetime at time $t$, and that there are $r$ dead particles between $X_{a}$ and $X_{t}$. Adding $X_{t}$ (with multiplicity two) decreases multiplicity of $X_{a}$ to 0.  
\item {\it Replacing $20^{r}$ by $10^{r}2$:} Suppose that $X_{a}$ is the largest label less or equal to $X_t$, its multiplicity is two, and there are $r\geq 0$ dead particles between $X_{a}$ and $X_{t}$. Adding $X_{t}$ removes one lifeline from particle $X_{a}$.  
\end{itemize}  

Simulating the (combinatorial version of the) Hammersley process with two lifelines confirms the fact that $E[MHS_{2}(\pi)]$ grows significantly slower than $E[MHS_{1}(\pi)]$: The $x$-axis in the figure is logarithmic. The scaling is clearly different, and is consistent (see inset) with logarithmic growth (displayed as a straight line on a plot with log-scaling on the $x$-axis). Experimental results (see the inset/caption of Fig.~\ref{k:5}) suggest the following bold
\begin{conjecture} \label{main} We have $
\lim_{n\rightarrow \infty}\frac{E[MHS_{2}[\pi]]}{\ln(n)}=\phi$, with $\phi=\frac{1+\sqrt{5}}{2}$ the golden ratio.
More generally, for an arbitrary $k\geq 2$ the relevant scaling is 
\vspace{-0.3cm}
\begin{equation} 
\lim_{n\rightarrow \infty}\frac{E[MHS_{2}[\pi]]}{\ln(n)}=
 \frac{1}{\phi_{k}},
\label{general:k}
\end{equation}
\vspace{-0.3cm} 
 where $\phi_{k}$ is the unique root in $(0,1)$ of equation $X^{k}-X^{k-1}+\ldots +X=1$. 
\end{conjecture}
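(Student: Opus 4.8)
\noindent\textbf{A proposed route to Conjecture~\ref{main}.} The plan is to push the chain of equivalences already set up in the paper one step further, turning the constant into the stationary mean of a single observable of an autonomous particle system, and then to locate the golden ratio through a mean-field (hydrodynamic) balance. I would start from the identity $E_{\pi\in S_{n}}[MHS_{k}(\pi)]=E[MinHAD_{k}(n)]$ and write $E[MinHAD_{k}(n)]=\sum_{t=1}^{n}p_{k}(t)$, where $p_{k}(t)$ is the probability that the particle inserted at step $t$ lands below every \emph{live} particle, i.e. that a new heap is opened (cf. the proof sketch of the correspondence and Lemma~\ref{dom}). In the continuous model $p_{k}(t)=E[M(t)]$, where $M(t)$ is the value of the lowest live particle; equivalently, in the combinatorial word model $p_{k}(t)=(E[Z_{t}]+1)/(t+1)$, where $Z_{t}$ counts the dead particles below the lowest live one (the leading block of $0$'s). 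Thus everything is controlled by the \emph{bottom} of the live configuration.

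Second I would pass to the logarithmic rescaling $u=t\,v$, $s=\ln t$. A Jacobian computation shows that under this change of variables the uniform arrivals become a space--time Poisson rain of unit intensity on $[0,\infty)\times{\bf R}$, while every existing particle drifts to the right with velocity equal to its own position, $\dot u=u$. In these coordinates the lowest live particle sits at a position $U(s)$ of order $1$, and the reduction above becomes
\[
E[MinHAD_{k}(n)]=\sum_{t\le n}\frac{E[U(\ln t)]}{t}\;\sim\;E[U]\,\ln n,
\]
so the conjecture is equivalent to the statement that the stationary mean position of the lowest live particle equals $1/\phi_{k}$.

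Third I would compute this constant by a mean-field balance on the lifeline densities. Let $\rho_{j}(u)$ be the density of live particles carrying exactly $j$ lifelines and $\rho=\sum_{j}\rho_{j}$. Since each arrival removes one lifeline from the nearest live particle below it, the induced hit-density is \emph{uniformly equal to $1$}, and a hit at $u$ strikes class $j$ with probability $\rho_{j}(u)/\rho(u)$. Writing the stationary transport equations with drift term $\frac{d}{du}(u\rho_{j})$ and seeking a constant bulk profile $\rho_{j}\to r_{j}$, $R=\sum_{j}r_{j}$, the top class gives $r_{k}=1-r_{k}/R$ and the lower classes give $r_{j}=(r_{j+1}-r_{j})/R$; this forces $r_{j+1}=(R+1)r_{j}$ and, after summation, $R=1-(R+1)^{-k}$, i.e. $\sum_{m=1}^{k}(R+1)^{-m}=1$. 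Hence $1/(R+1)$ is exactly the root $\phi_{k}\in(0,1)$ of the defining polynomial and $R+1=1/\phi_{k}$ ($=\phi$ when $k=2$). This, I believe, is the conceptual source of the golden ratio, and it already reproduces the conjectured limit at the level of the bulk density.

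The hard part will be the two steps that turn this heuristic into a theorem. First, one must justify the hydrodynamic limit itself: existence and uniqueness of the stationary measure of the rescaled drift-plus-rain system, together with a propagation-of-chaos estimate showing that the empirical lifeline densities concentrate on the solution of the transport equations with controlled fluctuations. Second, even granting the bulk profile, one must analyse the \emph{boundary layer} at the lowest live particle to prove that $E[U]=R+1$ exactly (equivalently $E[Z]=R$: the mean number of dead particles below the lowest live particle equals the bulk live density). The bulk and the edge carry genuinely different densities---for instance the bulk dead-particle density works out to $(R+1)^{-k}\neq R$---so this matching is a true boundary computation and is where the precise constant is won or lost. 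The degenerate case $k=1$, for which the same equations force $R=0$ while the correct growth is $2\sqrt{n}$ rather than logarithmic, is a cautionary reminder that it is the edge, not the bulk, that decides the final answer; controlling this boundary layer is the main obstacle and the reason the statement remains conjectural.
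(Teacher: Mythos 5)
You are attempting a statement that is a \emph{conjecture}: the paper offers no proof of it, only simulation evidence (Fig.~\ref{k:5}), the much weaker rigorous Theorem~\ref{limit} (existence of the limiting densities $l,c$ by subadditivity), and a promissory note deferring the ``physics-like'' hydrodynamic justification to later work. Your proposal is likewise explicitly not a proof, and judged as a program it is coherent and in places sharper than what the paper records. The reduction $E[MinHAD_{k}(n)]=\sum_{t}p_{k}(t)$ with $p_{k}(t)=(E[Z_{t}]+1)/(t+1)$, the exponential rescaling under which arrivals become unit-intensity Poisson rain and particles drift with $\dot u=u$, and the resulting identification of the constant with the stationary mean position of the lowest live particle are all sound modulo the convergence issues you yourself flag. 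Your mean-field algebra is internally consistent and replaces the paper's \emph{measured} density $l=\phi-1$ by a derivation: the self-consistency equation $R=1-(R+1)^{-k}$, i.e.\ $\sum_{m=1}^{k}(R+1)^{-m}=1$, does give $R+1=1/\phi_{k}$ (you correctly read the paper's polynomial as $X^{k}+\cdots+X=1$; the printed minus sign must be a typo, since only the all-plus version has a root in $(0,1)$ equal to $1/\phi$ for $k=2$). The two obstacles you name---rigor of the hydrodynamic limit and the boundary layer at the minimum---are exactly what the paper defers; note also that the paper's one rigorous contribution, the subadditivity argument of Theorem~\ref{limit}, is the natural skeleton for your first step and has no analogue in your plan.

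There is, however, one concrete point of friction you should be aware of: an intermediate prediction of your mean-field profile contradicts the paper's reported data. For $k=2$ your geometric profile gives $r_{2}=R/(R+1)=(3-\sqrt{5})/2\approx 0.382$ for fresh (two-lifeline) particles and $r_{1}=\sqrt{5}-2\approx 0.236$ for once-hit particles, whereas the paper reports experimentally $c=(3-\sqrt{5})/2$ as the density of \emph{ones}, hence $\sqrt{5}-2$ for the twos: your two numbers with the classes exchanged. The culprit is a gap--class correlation that well-mixedness discards: a particle that has just been hit has its hitter inserted into the gap above it, so it subsequently carries a small gap, is hit more rarely, and lingers in class $1$. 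For $k=2$ this error cancels in the aggregates (both pictures give live density $\phi-1$ and the constant $\phi$), but for $k\geq 3$ it need not: your bulk prediction $R=1/\phi_{k}-1$ and the paper's ``uniform digits at density $l$'' heuristic (which requires $l=\phi_{k}$ to yield the same constant) are incompatible, because $1/\phi_{k}-1=\phi_{k}$ holds only when $k=2$. So at least one of the two heuristics breaks for $k\geq 3$, and the breakage sits precisely in the mixing/boundary assumptions you identified as the crux. Your bottom line---that the statement remains conjectural pending the hydrodynamic limit and the edge analysis---is the correct one; just note that the bulk profile feeding your boundary computation is itself already in tension with the paper's experiments.
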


\begin{figure} 
\begin{center}
\includegraphics[width=10cm,height=6cm]{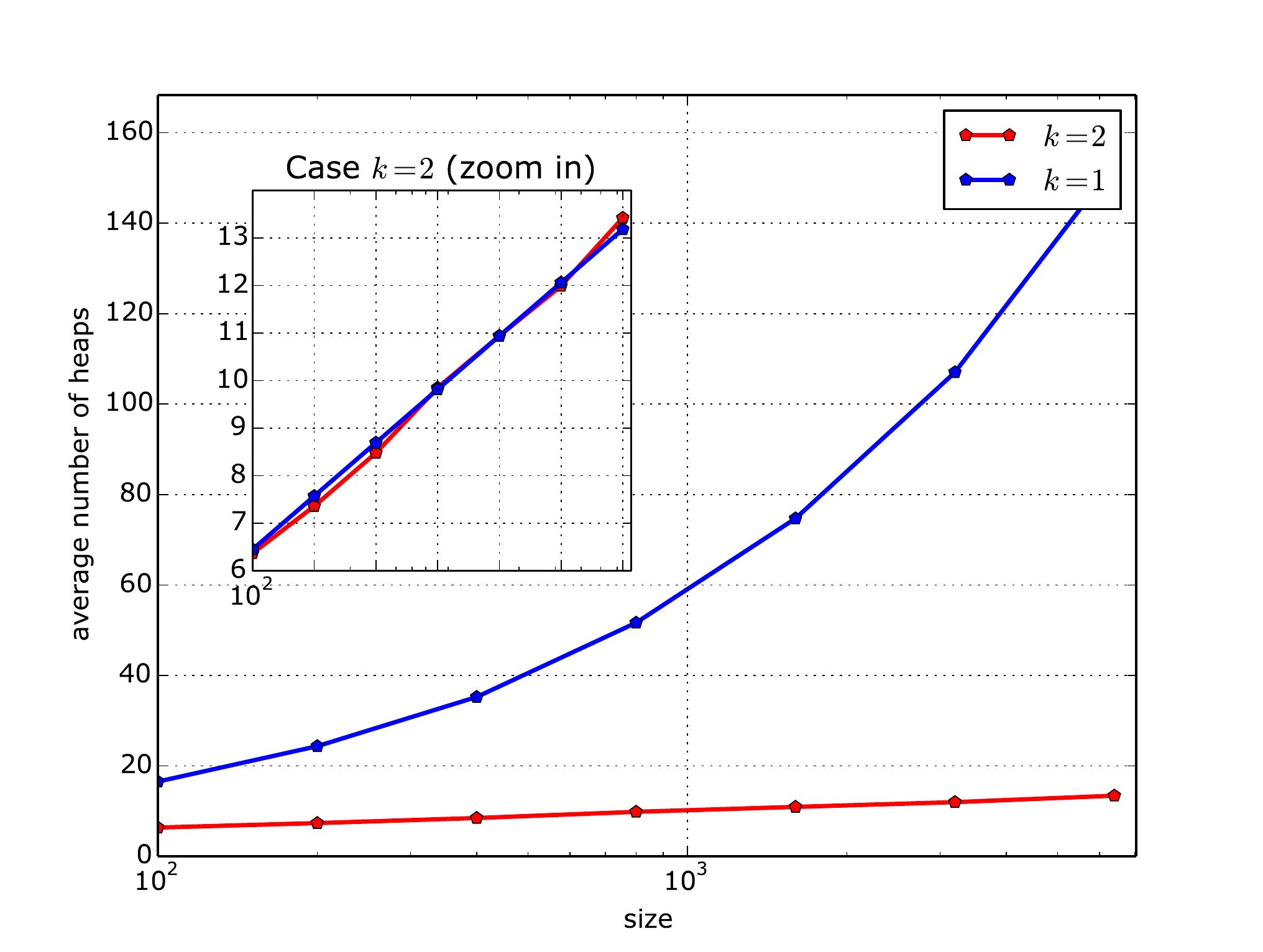}
\end{center}
\caption{Scaling of expected value of $MHS_{k}[\pi]$ for $k=1,2$. The inset shows $E[MHS_{2}[\pi]]$ (red) versus $\phi\cdot  \ln(n)+1$ (blue). The fit is strikingly accurate.} 
\label{k:5}
\end{figure} 

We plan to present the experimental evidence for the truth of equation~(\ref{general:k}) and a nonrigorous, "physics-like" justification, together with further insights on the so-called {\it hydrodynamic behavior} \cite{groenenboom} of the $HAD_{k}$ process in subsequent work \cite{hydrodynamic}. 
For now we limit ourselves to showing that one can (rigorously) perform a first step in the analysis of the $HAD_{2}$ process: we prove convergence of (some of) its structural characteristics. This will likely be useful in a full rigorous proof of Conjecture~\ref{main}. 

Denote by $L_{t}$ the number of digits 1+2, and by $C_{t}$ the number of ones in $w_{t}$. Let $l(t)=E[\frac{L(t)}{t}]$, $c(t)=E[\frac{C(t)}{t}]$.  $l(t),c(t)$ always belong to $[0,1]$. 

\begin{theorem} There exist constants $l,c\in [0,1]$ such that 
$l(t)\rightarrow l$, $c(t)\rightarrow c$. 
\label{limit}
\end{theorem}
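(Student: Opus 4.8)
The plan is to turn the statement into a question about the \emph{expected} one-step dynamics of the pair $(L_t,C_t)$ and then to reduce everything to the convergence of a single time-average. First I would record how the three transitions act on $(L_t,C_t)$: replacing $-10^{r}$ by $-10^{r}2$ changes $(L,C)$ by $(+1,0)$, replacing $10^{r}$ by $0^{r+1}2$ by $(0,-1)$, and replacing $20^{r}$ by $10^{r}2$ by $(+1,+1)$. Parsing $w_t$ as a marker-block followed by maximal blocks of the form ``live symbol $+$ trailing dead symbols,'' a uniformly chosen letter falls in a given block with probability proportional to its length; hence the three transitions fire with probabilities $p_1(t),p_2(t),p_3(t)$ equal to $E[Q_1/(t+1)],E[Q_2/(t+1)],E[Q_3/(t+1)]$, where $Q_1,Q_2,Q_3$ are the total lengths of the marker-block, of the $1$-blocks and of the $2$-blocks, so that $Q_1+Q_2+Q_3=t+1$. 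The delicate point is already visible here: $Q_2=C_t+(\text{dead symbols attached to }1\text{'s})$ counts more than the live symbols, so $p_2(t)$ is not a function of $L_t,C_t$ alone.

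Taking expectations of the increments gives the exact mean recursions $E[L_{t+1}]-E[L_t]=p_1(t)+p_3(t)=1-p_2(t)$ and $E[C_{t+1}]-E[C_t]=p_3(t)-p_2(t)$. Summing and dividing by $n$, and writing $\overline{p_j}(n)=\tfrac1n\sum_{t<n}p_j(t)$, this collapses to
\[
 l(n)=1-\overline{p_2}(n)+o(1),\qquad c(n)=1-\overline{p_1}(n)-2\,\overline{p_2}(n)+o(1).
\]
A cheap preliminary lemma is that $\overline{p_1}(n)\to 0$: the total number of Rule~$1$ events over the run equals the number of heaps created, i.e. $MHS_2(\pi)$, and since $MHS_2\le MHS_1$ with $E[MHS_1(\pi)]\sim 2\sqrt n$ we get $\sum_{t<n}p_1(t)=E[MHS_2(\pi)]=O(\sqrt n)=o(n)$. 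Consequently $c(n)=1-2\,\overline{p_2}(n)+o(1)=2l(n)-1+o(1)$, and \emph{both} sequences converge as soon as the single scalar Cesàro average $\overline{p_2}(n)$ converges; the limits would then be $l=1-\lim\overline{p_2}$ and $c=2l-1$.

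The heart of the matter is therefore the convergence of $\overline{p_2}$. The guiding heuristic is that, in the slow logarithmic time $s=\ln t$ (the step size being $\Theta(1/t)$, this is a stochastic-approximation regime), the dead symbols equidistribute among the blocks, which forces $p_2(t)\to c/l$, $p_3(t)\to (l-c)/l$; feeding this into the recursions yields an autonomous system with a unique attracting fixed point $l=1/\phi$, $c=l(1-l)=2\phi-3$, matching the $\phi\ln n$ scaling of Conjecture~\ref{main}. Since Theorem~\ref{limit} only asserts existence of the limits, I would avoid computing the fixed point: the bounded sequence $\bigl(l(t),c(t)\bigr)\in[0,1]^2$ has subsequential limits, and I would show that each such limit is a fixed point of the limiting deterministic dynamics and that this fixed point is unique, which forces convergence.

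The step I expect to be the main obstacle is exactly making this self-averaging rigorous, i.e. controlling the empirical distribution of dead-run lengths. The system is not closed: the one-step change of $Q_3$ under a Rule~$3$ event removes a block of length $1+r$ and recreates it as a $1$-block plus a new length-one $2$-block, so $E[\Delta Q_3]$ already involves the second moment $\sum_i r_i^2$ of the dead-run lengths, and controlling that requires the third moment, and so on. To break this hierarchy I would study the empirical measure of the whole configuration, proving tightness and characterising its limit points through a martingale problem, and/or exploit the domination preserved by the dynamics (in the spirit of Lemma~\ref{dom}) to set up a coupling that bounds the fluctuations of $Q_2,Q_3$ about their means. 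This moment-closure/equidistribution problem is what separates mere existence of $l,c$ from their evaluation, and is presumably why the paper presents Theorem~\ref{limit} as only a first step toward the full hydrodynamic analysis.
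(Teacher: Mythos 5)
Your reduction is sound as far as it goes: the increment table for $(L_t,C_t)$, the mean recursions, the observation that $\sum_{t<n}p_1(t)=E[\mbox{number of rule-1 events}]=E[MHS_2(\pi)]\le E[MHS_1(\pi)]=O(\sqrt{n})=o(n)$, and the resulting constraint $c(n)=2l(n)-1+o(1)$ are all correct. But the reduction does not touch the existence question, because $l(n)=1-\overline{p_2}(n)+o(1)$ is an identity: convergence of the Ces\`{a}ro average $\overline{p_2}(n)$ is \emph{equivalent} to convergence of $l(n)$, i.e., to the statement of Theorem~\ref{limit} itself. Everything you then say about why $\overline{p_2}$ should converge --- equidistribution of dead symbols, the autonomous fixed-point dynamics, tightness plus a martingale problem, a coupling to control $Q_2,Q_3$ --- is a program you explicitly do not carry out, and you yourself flag the moment-closure/self-averaging step as the main obstacle. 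That obstacle is real: the route through subsequential limits requires first establishing that the dynamics of $(l(t),c(t))$ \emph{closes} into an autonomous system, which is exactly the hydrodynamic/equidistribution property that the paper itself invokes only nonrigorously and defers to later work. So the proposal, as written, proves strictly less than the theorem: it converts two limits into one and leaves that one open.

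The paper proves existence by a different and much more elementary device that bypasses self-averaging altogether: subadditivity and Fekete's lemma, applied not to $l$ and $c$ separately but to two integrated linear combinations, $E[2L_n-C_n]$ (the expected number of surviving lifelines, i.e., slots) and $E[L_n-C_n]$ (the expected number of particles retaining both lifelines). For each of these, subadditivity holds \emph{on every trajectory}: writing a trajectory as a concatenation $XY_m$, one shows $a(XY_m)\le a(X)+a(Y_m)$ by induction on $m$, proved jointly with the coupling invariant $s(Y_m)\subseteq s(Y_m;s(X))$ (as multisets) --- the process restarted from the empty configuration on the suffix $Y_m$ keeps fewer live lifelines than the process run on $Y_m$ from the state left by $X$. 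Fekete's lemma then gives convergence of $2l(t)-c(t)$ and of $l(t)-c(t)$, hence of $l(t)$ and $c(t)$, with no information about the limiting values --- which is all the theorem asserts. That trade (give up identifying the limit, gain an unconditional subadditivity argument) is the idea missing from your proposal. One remark worth keeping from your attempt: the rigorous relation $c=2l-1+o(1)$, combined with the conjectured $l=\phi-1$, forces $c=\sqrt{5}-2\approx 0.236$ rather than the value $\frac{3-\sqrt{5}}{2}\approx 0.382$ quoted in the paper (the latter equals $l-c$, the limiting density of the digit $2$); so your reduction, while not proving the theorem, does expose an apparent inconsistency in the paper's stated experimental constants.
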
 

\begin{proof-sketch}
We use a standard tool, {\it subadditivity}: if sequence $a_{n}$ satisfies $a_{m+n}\leq a_{m}+a_{n}$ for all $m,n\geq 1$ then (by Fekete's Lemma (\cite{steele1997probability} pp. 3, \cite{szpankowski}) $\lim_{n\rightarrow \infty} a_{n}/n$ exists. We show in the Appendix that this is the case for two independent linear combinations of $l(t)$ and $c(t).$\end{proof-sketch}

Experimentally (and nonrigorously) $l=\phi-1=\frac{\sqrt{5}-1}{2}$ and $c=\frac{3-\sqrt{5}}{2}$. "Physics-like" nonrigorous arguments then imply the desired scaling. An additional ingredient is that  digits 0/1/2 are uniformly distributed (conditional on their density) in a large  $w_{t}$.
This is intuitively true since for large $t$ the behavior of the $HAD_{k}$ process is described by a compound Poisson process. We defer  more complete explanations to \cite{hydrodynamic}.

\section{Heap tableaux, a hook inequality and a generalization of the Robinson-Schensted Correspondence.} 

Finally, we present an extension of Young diagrams to heap-based ta-\\ bleaux. All proofs are given in the Appendix. A {\it ($k$-)heap tableau $T$}  is $k$-ary min-heap of integer vectors, so that for every $r\in \Sigma_{k}^{*}$, the vector $V_{r}$ at address $r$ is nondecreasing.
We formally represent the tableau as a function $T:\Sigma_{k}^{*} \times {\bf N}   \rightarrow {\bf N} \cup \{\perp\}$ such that (a).  $T$ has {\it finite support:} the set $dom(T)=\{(r,a):T(r,a)\neq \perp\}$ of nonempty positions is finite. 
(b). $T$ is {\it $\sqsubseteq$-nondecreasing}: if $T(r,a)\neq \perp$ and $q \sqsubset r$ then $T(q,a)\neq \perp$ and $T(q,a)\leq T(r,a)$. In other words, $T(\cdot,a)$ is a min-heap. (c).  $T$ is {\it columnwise increasing}: if $T(r,a)\neq \perp$ and $b< a$ then $T(r,b)\neq \perp$ and $T(r,b)< T(r,a)$. That is, each column $V_r$ is increasing.
The {\it shape of $T$} is the heap $S(T)$ where  node with address $r$ holds value $|V_r|$. 

A tableau is {\it standard} if (e). for all $1\leq i\leq n=|dom(T)|$, $|T^{-1}(i)|=1$ and (f). If $x\leq_{lex} y$ and $T(y,1)\neq \perp$ then $\perp \neq T(x,1)\leq T(y,1)$. I.e., labels in the first heap $H_{1}$ are increasing from left to right and top to bottom.

\begin{example} 
A heap tableau $T_{1}$ 
with 9 elements 
is presented in Fig.~\ref{forest-young} (a) and as a Young-like diagram in Fig.~\ref{forest-young} (b). Note that: (i). Columns correspond to  {\it rows} of $T_{1}$ (ii). Their labels are in  $\Sigma_{2}^{*}$, rather than {\bf N}. (iii). Cells may contain $\perp$. (iv). Rows need not be increasing, only {\it min-heap ordered}.  
\end{example} 
One important drawback of our notion of heap tableaux above is that they do not reflect the evolution of the process $HAD_{k}$ the way ordinary Young tableaux do (on their first line) for process $HAD_{1}$ via the Schensted procedure \cite{schensted}: A generalization with this feature would seem to require that each cell contains not an integer but a {\it multiset} of integers. Obtaining such a notion of tableau 
is part of ongoing research. 

However, we can motivate our definition of heap tableau by the first application below, a hook inequality for such tableaux. To explain it, note that heap tableaux generalize both heap-ordered trees and Young tableaux. In both cases there exist hook formulas that count the number of ways to fill in a structure with $n$ cells by numbers from $1$ to $n$: 
\cite{hook-formula} for Young tableaux  and \cite{knuth:vol3} (Sec.5.1.4, Ex.20) for heap-ordered trees. It is natural to wonder whether there exists a hook formula for heap tableaux
that provides a common generalization of both these results. 

Theorem~\ref{unif} gives a partial answer: not a formula but a {\it lower bound.} To state it, given $(\alpha,i)\in dom(T)$, define the {\it hook length $H_{\alpha,i}$} to be the cardinal of set $\{(\beta,j)\in dom(T): [(j=i)\AND (\alpha\sqsubseteq \beta)] \OR [(j\geq i) \AND (\alpha = \beta)]\}$. For example, Fig.~\ref{forest-young}(c). displays the hook lengths of cells in $T_{1}$. 
\begin{theorem} 
Given $k\geq 2$ and a $k$-shape $S$ with $n$ free cells, the number of ways to create a heap tableau $T$ with shape $S$ by filling its cells with numbers $\{1,2,\ldots, n\}$ is {\it at least} $
\frac{n!}{\prod_{(\alpha,i)\in dom(T)} H_{\alpha,i}}.$ The bound is tight for Young tableaux \cite{hook-formula}, 
heap-ordered trees \cite{knuth:vol3}, and infinitely many other examples, but is also {\bf not} tight for infinitely many (counter)examples. 
\label{unif} 
\end{theorem}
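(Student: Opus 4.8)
The plan is to prove the lower bound by induction on $n=|dom(T)|$, following the classical strategy behind the hook-length formula but tracking the defect introduced by branching. Write $N(S)$ for the number of standard fillings of a shape $S$ and $\hat N(S)=n!/\prod_{(\alpha,i)}H_{\alpha,i}$ for the claimed lower bound. Call $(\alpha,i)$ a \emph{corner} of $S$ if it is maximal in the filling order, i.e.\ $(\alpha,i+1)\notin dom(T)$ and $(\alpha c,i)\notin dom(T)$ for every child $\alpha c$; these are exactly the cells that may hold the value $n$, and each has $H_{\alpha,i}=1$. Classifying fillings by the position of $n$ gives $N(S)=\sum_{c}N(S\setminus c)$ over corners $c$, and the induction hypothesis yields $N(S\setminus c)\ge\hat N(S\setminus c)$. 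Removing a corner $c=(\alpha,i)$ lowers by one exactly the hook lengths of the cells in the \emph{antihook} $A(c)=\{(\beta,i):\beta\sqsubseteq\alpha\}\cup\{(\alpha,j):j\le i\}$, so $\hat N(S\setminus c)/\hat N(S)=\tfrac1n\prod_{x\in A(c)\setminus\{c\}}\frac{H_x}{H_x-1}$. Hence it suffices to prove the purely combinatorial corner inequality
\[ \sum_{c\text{ corner}}\ \prod_{x\in A(c)\setminus\{c\}}\frac{H_x}{H_x-1}\ \ge\ n. \qquad (\star) \]

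The natural tool for $(\star)$ is a branching version of the Greene--Nijenhuis--Wilf hook walk. Start at a uniformly random cell and repeatedly jump to a uniformly random \emph{other} cell of the current hook, stopping at a corner; each jump strictly ascends the filling order, so the walk terminates at a corner with probability $1$ and $\sum_c P(\text{walk ends at }c)=1$. The first structural observation is that the down-set $\{x:x\le c\}$ of a corner $c=(\alpha,i)$ is the \emph{rectangle} indexed by the chain of ancestors of $\alpha$ (a chain, since ancestors are totally ordered by $\sqsubseteq$) together with column positions $1,\dots,i$; thus the part of the walk that can still reach $c$ is a monotone lattice walk on this rectangle, precisely the object GNW analyse. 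The key lemma is then
\[ \prod_{x\in A(c)\setminus\{c\}}\frac{H_x}{H_x-1}\ \ge\ n\,P(\text{walk ends at }c), \]
whose right-hand side equals $\sum_{x\le c}\Phi(x)$, with $\Phi(x)=P(\text{walk from }x\text{ reaches }c)$. I would establish this by the GNW arm/leg telescoping carried out with denominators the \emph{full} hooks $H_x$: in the ideal case where every forward jump from a down-set cell lands again in the down-set, the telescoping collapses to the left-hand product with equality. The only jumps leaving the down-set are (i) column jumps past level $i$, and (ii) jumps into a subtree not containing $\alpha$. A cell-by-cell check shows type-(i) ``leaks'' are reabsorbed by the telescoping (this is exactly why the bound is tight for ordinary Young tableaux, where there is no branching and leaks are only of type (i)), whereas each type-(ii) leak merely deletes a nonnegative summand, so the left side \emph{dominates}. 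Summing over corners and using $\sum_cP=1$ yields $(\star)$ and closes the induction.

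The tightness dichotomy then follows from locating the slack, which lives entirely in the type-(ii) leaks: equality holds throughout iff at no stage of the induction does an interior cell of any corner-rectangle have a child off the path to that corner. This never occurs when $k=1$ (the tree is a single chain, so there is no off-path branching, recovering \cite{hook-formula}) nor when every column has length one (then $i=1$, so no interior cell exists, recovering Knuth's tree formula \cite{knuth:vol3}), and it also persists for infinitely many mixed shapes; conversely, any shape with a node that branches off the path to a corner whose column has length $\ge 2$ is strict, and such gadgets can be stapled together to give infinitely many counterexamples. The smallest is the root with two children in which the root and one child have columns of length two while the other child is a single cell, where $N=8>7.5=\hat N$. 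I expect the main obstacle to be the key lemma: executing the GNW telescoping with full hooks and rigorously verifying that type-(i) leaks cancel while type-(ii) leaks only remove nonnegative terms, i.e.\ controlling how off-path branching interacts with the arm/leg factorization. The reduction to $(\star)$ and the tightness bookkeeping are routine once this is in hand.
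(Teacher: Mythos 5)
Your proof skeleton coincides with the paper's own: induction on the number of cells via corner removal, reduction to the corner inequality (your $(\star)$ is exactly the paper's $\sum_{\gamma\in C(T)}F_{T_\gamma}/F_T\ge 1$), the same branching hook walk, and the same key lemma $n\,P(\mbox{walk ends at }c)\le\prod_{x}H_x/(H_x-1)$ over the cells whose hook contains $c$. The trouble is that this key lemma---which you correctly identify as the crux---is precisely what you do not prove, and your sketched route to it is shaky. The paper proves it the Greene--Nijenhuis--Wilf way: condition on the hook projection $A=\{\alpha_1\sqsubset\dots\sqsubset\alpha_m\}$ and vector projection $B=\{i_1<\dots<i_r\}$ of the walk, use the recursion $P(A,B)=\frac{1}{H_{\alpha_1,i_1}-1}\bigl[P(A\setminus\{\alpha_1\},B)+P(A,B\setminus\{i_1\})\bigr]$, and observe that the GNW identity $H_{\alpha_1,i_1}-1=(H_{\alpha_1,i_r}-1)+(H_{\alpha_m,i_1}-1)$ survives for $k\ge 2$ only as the inequality $H_{\alpha_1,i_1}-1\ge(H_{\alpha_1,i_r}-1)+(H_{\alpha_m,i_1}-1)$, which points in the useful direction: the denominator can only be larger, so per-projection probabilities can only shrink relative to the hook-ratio products. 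That one displaced-hook inequality, justified from properties (b) and (c) of heap tableaux (vector hooks shrink as the address gets deeper, and the heap hook of $(\alpha_1,i_1)$ contains the heap hook of $(\alpha_1,i_r)$ together with a disjoint copy of the heap hook of $(\alpha_m,i_1)$ minus its root, plus possibly extra off-path subtrees), is the entire technical content of the theorem. Your ``type-(i)/type-(ii) leak'' bookkeeping never isolates it, and you yourself flag this step as the unresolved obstacle; as it stands the proof has a hole exactly where the theorem lives.

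Moreover, your proposed equality criterion is false, which undermines your treatment of the tight cases. Take $k=2$ and the shape with a root column of length $2$ and two singleton child columns ($n=4$): the hooks are $4,1,1,1$, the bound gives $24/4=6$, and the true count is also $6$ (the root cell is forced to hold $1$; the remaining three entries are unconstrained). Equality holds even though the root is an interior cell of the rectangle of the corner at address $1$ and has an off-path child at address $2$---by your criterion this shape should be strict. In fact this is the smallest member of the paper's tight family $T_{r,k}$ (a complete $k$-ary first slice plus $k-1$ extra root cells), whose tightness the paper verifies by a direct count. The genuine source of slack is strictness of the displaced-hook inequality along some projection pair, which requires a corner at depth $\ge 1$ \emph{and} height $\ge 2$ together with off-path cells, as in the paper's family $W_r$; branching alone does not suffice. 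Consequently your claim that tightness ``persists for infinitely many mixed shapes'' is unsupported, and your ``stapling'' construction of infinitely many strict examples is likewise only asserted---the paper instead exhibits the family $W_r$ and notes that the predicted count $(2r+1)(2r-1)(2r-2)/4$ fails to be an integer for even $r$. Your one verified counterexample ($N=8>7.5$) is correct, and is precisely the paper's $W_2$.
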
 

We leave open the issue whether one can tighten up the lower bound above to a formula by modifying the definition of the hook length $H_{\alpha,i}$.

\begin{figure}[ht]
\begin{minipage}{.4\textwidth}
\begin{center} 
\begin{tikzpicture}
\node[inner sep=1pt] (A) at (3.5,0)
    {\gyoung(2;4;8:<[3]>)};
\node[inner sep=0pt] (B) at (2,-1)
    {\gyoung(\eleven;<12>;<13>:<[3]>)};
\node[inner sep=0pt] (C) at (5,-1)
    {\gyoung(<6>;<14>:<[2]>)};
\node[inner sep=0pt] (D) at (4,-2)
    {\gyoung(<10>:<[1]>)};
\draw[-,thick] (A.south) -- (B.north);
\draw[-,thick] (A.south) -- (C.north);
\draw[-,thick] (C.south) -- (D.north);
\end{tikzpicture}
\end{center}
\end{minipage}
\begin{minipage}{.6\textwidth}
\begin{center} 
\[
\gyoung(::\lambda:0:1:\twozeros:\zone:\onezero:::\lambda:0:1:\twozeros:\zone:\onezero:,:1;2;\eleven;6;\perp;\perp;\onezero::1;6;3;3;\perp;\perp;1:,:2;4;\twelve;\fourteen:::::2;4;2;1:,:3;8;\thirteen::::::3;2;1)
\]
\end{center}
\end{minipage}  

\caption{(a). Heap tableau $T_{1}$ and its shape $S(T_{1})$ (in brackets) (b). The equivalent Young tableau-like representation of $T_{1}$ and (c). The hook lengths.}
\label{forest-young}
\end{figure}
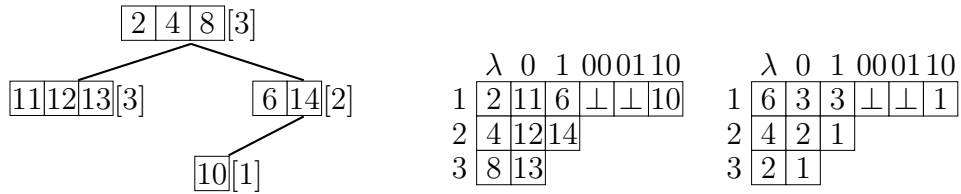

We can create $k$-heap tableaux from integer sequences by a version of the Schensted procedure \cite{schensted}. Algorithm Schensted-HEAP$_{k}$ below performs {\it column insertions}  and gives to any bumped element $k$ choices for insertion/bumping, the children of vector $V_r,$ with addresses $r\cdot \Sigma_{k}$.

\begin{theorem}
The result of applying the Schensted-HEAP$_{k}$ procedure to an  arbitrary permutation $X$ is indeed a $k$-ary heap tableau.  
\label{sch-heap} 
\end{theorem}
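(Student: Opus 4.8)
The plan is to induct on the number of inserted letters. Writing $X=(X_1,\ldots,X_n)$ and letting $T^{(m)}$ be the tableau produced after inserting $X_1,\ldots,X_m$, I would show that if $T^{(m)}$ satisfies the three defining conditions of a $k$-heap tableau --- (a) finite support, (b) each row $\sqsubseteq$-min-heap ordered, and (c) each column strictly increasing --- then so does $T^{(m+1)}$. The base case $m=0$ is the empty tableau, which is vacuously valid. All the content therefore lies in analyzing a single insertion, which is a chain of column insertions into columns $V_{p_0},V_{p_1},V_{p_2},\ldots$ starting at the root $p_0=\lambda$, where each $p_{i+1}$ is one of the $k$ children of $p_i$ and the entry bumped out of $V_{p_i}$ is precisely the value fed into $V_{p_{i+1}}$.

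The routine step is the per-column invariant. Inserting a value $z$ into a strictly increasing column $V_r$ by the Schensted rule --- replace the smallest entry strictly larger than $z$, or append $z$ at the bottom if no such entry exists --- keeps that column strictly increasing, exactly as in the classical column-insertion argument, using only that all entries are distinct (which holds because $X$ is a permutation). Hence finite support (a) and condition (c) are immediate, and I would dispatch them briefly.

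The heart of the matter, and the step I expect to be the main obstacle, is showing that condition (b) together with its induced shape constraint survives each bump. The key observation is a monotonicity of the bumping route that is \emph{forced by the heap property itself}. Suppose $z=T(r,a)$ is bumped from row $a$ of $V_r$ and inserted into a child $V_{r\cdot j}$. By condition (b) for $T^{(m)}$, if the cell $(r\cdot j,a)$ is occupied then $T(r\cdot j,a)>T(r,a)=z$; thus $V_{r\cdot j}$ already holds an entry exceeding $z$ at row $a$, so $z$ must bump some entry at a row $a'\le a$, and if instead $(r\cdot j,a)$ is empty then the append row $|V_{r\cdot j}|+1\le a$ as well. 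The landing row therefore only decreases along the chain, and --- crucially --- this inequality holds for \emph{every} child $j$, which is exactly why any of the $k$ choices produces a valid tableau (the nondeterminism that underlies the lower bound of Theorem~\ref{unif}). I would then verify the parent--child inequalities created at the landing cell $(r\cdot j,a')$: against the parent $(r,a')$ one has $T(r,a')\le T(r,a)=z$ since $a'\le a$ and $V_r$ is increasing; against each child $(r\cdot j\cdot l,a')$ one uses that the old occupant of $(r\cdot j,a')$ was $>z$ (being the smallest entry exceeding $z$) together with the inductive heap inequality, so the new value $z$ still sits below its children; and every unaffected cell inherits its inequality verbatim. By transitivity of $\sqsubseteq$ these local checks restore the full min-heap condition on every row.

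The same inequality $a'\le a$ simultaneously preserves the shape. Because the landing (or append) row never exceeds $a\le|V_r|$, the parent cell $(r,a')$ always exists, so no child column ever becomes longer than its parent and the support stays prefix-closed --- precisely what condition (b) demands of the empty cells. Finally, the landing row is bounded below by $1$, and at each step the chain either appends (terminating) or descends into a strictly deeper occupied child column; since $T^{(m)}$ is finite, the chain terminates after finitely many steps, completing the induction. The point deserving the most care is the bookkeeping of this last argument: verifying that the single inequality $z<T(r\cdot j,a)$ at the origin row controls, all at once, the landing row, the newly created parent--child inequalities, and the shape constraint. I would therefore isolate it as a single-step lemma about one bump and apply it repeatedly along the chain.
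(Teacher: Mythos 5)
Your proposal is correct and follows essentially the same route as the paper's proof: both reduce to analyzing a single insertion, dispatch the increasing-column condition via the classical column-insertion argument, and hinge on the same key observation that the min-heap property forces the landing row of each bumped element to be at most the row it was bumped from, which simultaneously yields the local parent/child inequalities and the prefix-closure of the shape. Your packaging of this as a single-bump lemma, together with the explicit termination argument, is just a slightly tidier organization of the paper's chain-of-bumps induction.
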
 
\begin{example}
Suppose we start with $T_{1}$ from Fig.~\ref{forest-young}(a). Then (Fig.~\ref{insert}) 9 is appended to vector $V_\lambda$. 7 arrives, bumping 8, which in turn bumps 11. Finally 11 starts a new vector at position 00. Modified cells are grayed. 
\end{example}

Procedure Schensted-HEAP$_{k}$ does {\bf not} help in computing the longest heapable subsequence: The complexity of computing this parameter is open \cite{byers2011heapable}, and we make no progress on this issue. On the other hand, we can give a $k\geq 2$ version of the R-S correspondence:

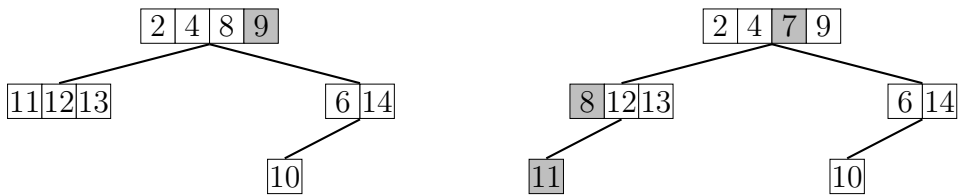
\begin{figure}[ht]
\begin{minipage}{.5\textwidth}
\begin{tikzpicture}
\node[inner sep=0pt] (A) at (2,1)
    {\gyoung(2;4;8!\grey<9>)};
\node[inner sep=0pt] (B) at (0,0)
    {\gyoung(\eleven;<12>;<13>)};
\node[inner sep=0pt] (C) at (4,0)
    {\gyoung(<6>;<14>)};
\node[inner sep=0pt] (D) at (3,-1)
    {\gyoung(<10>)};
\draw[-,thick] (A.south) -- (B.north);
\draw[-,thick] (A.south) -- (C.north);
\draw[-,thick] (C.south) -- (D.north);
\end{tikzpicture}
\end{minipage}
\begin{minipage}{.5\textwidth}
\begin{tikzpicture}
\node[inner sep=0pt] (A) at (4,1)
    {\gyoung(2;4!\grey<7>!\white<9>)};
\node[inner sep=0pt] (B) at (2,0)
    {\gyoung(!\grey<8>!\white;<12>;<13>)};
\node[inner sep=0pt] (C) at (6,0)
    {\gyoung(<6>;<14>)};
\node[inner sep=0pt] (D) at (5,-1)
    {\gyoung(<10>)};
\node[inner sep=0pt] (E) at (1,-1)
    {\gyoung(!\grey;<\eleven>)};
\draw[-,thick] (A.south) -- (B.north);
\draw[-,thick] (A.south) -- (C.north);
\draw[-,thick] (C.south) -- (D.north);
\draw[-,thick] (B.south) -- (E.north);
\end{tikzpicture}
\end{minipage} 
\caption{Inserting 9 and 7 into $T_{1}$.} 
\label{insert}
\end{figure}

\begin{theorem} For every $k\geq 2$ there exists a bijection between permutations $\pi\in S_{n}$ and pairs $(P,Q)$ of $k$-heap tableaux with $n$ elements and identical shape, where 
$Q$ is a standard tableau. 
\label{rs-heap}
\end{theorem}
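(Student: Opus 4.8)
The plan is to imitate the classical Robinson--Schensted bijection, replacing row insertion by the branching column insertion of the Schensted-HEAP$_k$ procedure and replacing the recording of outer corners by a heap-ordered recording tableau. Given $\pi\in S_n$, define the \emph{insertion tableau} $P=P(\pi)$ by inserting $\pi(1),\dots,\pi(n)$ successively via Schensted-HEAP$_k$; by Theorem~\ref{sch-heap} each intermediate object (being itself the output of the procedure on a prefix) is a legitimate $k$-heap tableau, and the final $P$ has content $\{1,\dots,n\}$. Each insertion terminates by appending a single entry to the end of one vector $V_{r_m}$ (either lengthening it or opening it as a new node), so the shape grows by exactly one cell per step. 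Define the \emph{recording tableau} $Q=Q(\pi)$ by writing the step index $i$ into the cell created at step $i$. Then $P$ and $Q$ have identical shape by construction, and I would verify that $Q$ is standard: (e) is immediate; (c) holds because positions inside a fixed vector are filled in increasing order; and (b) holds because whenever $(r,a)$ is appended the ancestor cell $(q,a)$ ($q\sqsubset r$) is already present in the valid tableau $P$ and hence carries a smaller label. Condition (f) is the delicate one and is discussed with the obstacle below.

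The inverse map is driven by a structural observation that makes reversal essentially choice-free. Within a single Schensted-HEAP$_k$ insertion the bumped elements travel strictly downward: if $y_i$ is ejected from $V_{r_{i-1}}$ and column-inserted into a child, it lands in $V_{r_i}$ with $r_i=r_{i-1}c_i$ for a single letter $c_i$. Hence the visited nodes $\lambda=r_0\sqsubset r_1\sqsubset\cdots\sqsubset r_m$ are exactly the successive prefixes of the terminal node $r_m$, so $r_m$ alone encodes the entire path together with all of its branch choices. To invert a pair $(P,Q)$ I would locate the cell of $Q$ carrying the largest label $n$; since $n$ is maximal, condition (b) forbids a child-cell below it and (c) forces it to the end of its vector, so it is a removable corner. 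Reading off its node $r_m$, I reverse-bump along the prefix chain $r_m\sqsupset r_{m-1}\sqsupset\cdots\sqsupset\lambda$: delete the last entry of $V_{r_m}$, reinsert its value into the parent $V_{r_{m-1}}$ by replacing the largest entry strictly smaller than it and pushing that entry one level up, and iterate until an element is ejected from $V_\lambda$. That element is $\pi(n)$; deleting the corner and the label $n$ yields $(P^-,Q^-)$, to which one recurses. No choices are made here, since the parent node is obtained by erasing the last address letter and reverse column insertion inside a vector is deterministic, so the inverse is well defined on every admissible pair.

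It remains to show the two maps are mutually inverse, which I would reduce to the single-step statement that forward insertion of $x$ into $T$ and reverse extraction of the newly created corner of $T\leftarrow x$ are inverse bijections, and then induct on $n$. The direction $\mathrm{reverse}\circ\mathrm{forward}=\mathrm{id}$ is the easy half: the reverse path walks back up exactly the chain $r_m\sqsupset\cdots\sqsupset r_0$ created by the insertion, and level by level reverse column insertion undoes forward column insertion.

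The main obstacle is the other direction, $\mathrm{forward}\circ\mathrm{reverse}=\mathrm{id}$: after extracting $x$ one must check that re-inserting $x$ reproduces the very same branch letters $c_1,\dots,c_m$, so that the recreated corner is precisely the cell labeled $n$ in $Q$. The downward path is forced through the prefixes of $r_m$, so what must be shown is that at each level the algorithm's deterministic child-selection rule is \emph{compelled} to pick $c_i$ --- equivalently, that the configuration left behind by one reverse step admits no alternative forward choice. This is exactly the point where the precise child-selection rule of Schensted-HEAP$_k$, and the heap constraints (b)--(c) that single out the unique admissible child, must be invoked; the same rule is what underlies condition (f) for $Q$, tying the standardness of the recording tableau to the reversibility of insertion. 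I expect the bulk of the appendix argument to be spent pinning down this choice, for which the monotonicity already exploited in Lemma~\ref{dom} is the natural tool.
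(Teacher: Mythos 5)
Your construction coincides with the paper's: $P$ is built by Schensted-HEAP$_k$, $Q$ records the step index in the single cell created at each step, and the inverse locates the cell of $n$ in $Q$ and reverse-bumps up the prefix chain of its address (your observation that the bump path is exactly the chain of prefixes of the terminal node is the same fact the paper uses implicitly when it says each bumped element came ``from the parent vector''). The parts you actually prove --- equality of shapes, conditions (e), (c), (b) for $Q$, determinism of the reverse walk, and $\mathrm{reverse}\circ\mathrm{forward}=\mathrm{id}$ --- are correct and match the paper. However, you misjudge condition (f): it is not ``the delicate one'' and does not need any reversibility analysis. The paper disposes of it directly: BUMP tries children left to right, so a vector $V_{zb}$ is created only after every left sibling $V_{za}$, $a<b$, already exists (appending to a nonexistent vector always succeeds, so failure at $a$ forces $V_{za}$ to be nonempty), and a child vector can be created only after its parent is nonempty. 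Hence creation times --- which are exactly the entries $Q(\cdot,1)$ --- respect both generating relations of $\preceq_{lex}$, hence the whole partial order. You should decouple (f) from the inversion argument; as written you leave it hanging on an obstacle it does not depend on.

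The genuine gap in your write-up is the one you name yourself: $\mathrm{forward}\circ\mathrm{reverse}=\mathrm{id}$, i.e.\ that re-inserting the extracted value is compelled to retrace the same branch letters, which is what surjectivity onto all admissible pairs $(P,Q)$ requires. Announcing that ``the bulk of the argument'' would go there, with only a gesture toward a tool, is not a proof of that step. Two further remarks to calibrate this. First, the tool you point to, Lemma~\ref{dom}, is misdirected: that lemma concerns signatures and domination of heap \emph{forests} in the greedy $MHS_k$ algorithm and plays no role in the tableau insertion analysis. Second, in fairness, the paper itself never discharges this step either: its part (ii) only shows that $\sigma$ can be recovered from its own image $(P_{\sigma},Q_{\sigma})$ --- injectivity, with Lemma~\ref{red} (deleting the largest label of a standard heap tableau leaves a standard heap tableau) sustaining the induction --- and silently treats that as establishing the bijection. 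So your proposal is open precisely where the paper is silent; the difference is that you flag the hole, but flagged or not, it remains unfilled in your proof.
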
 
Condition "$Q$ is standard" is specific to case $k\geq 2$: heaps simply have "too many degrees of freedom" between siblings. Schensted-HEAP$_{k}$ solves this problem by starting new vectors from left to right and top to bottom. 

\begin{center} 
\begin{pseudocode}[doublebox]{Schensted-HEAP$_k$}{X=x_{0},\ldots, x_{n-1}} 
\FOR i\mbox{ in range}(n): BUMP(x_{i}, \lambda)\\
\\
 \mbox{\bf PROCEDURE BUMP}(x, S):  ~~~~~\# S \textit{ is a set of adresses}.\\	
\hspace{3mm}\mbox{- Attempt to append } x \mbox{ to some } V_r, r \in S\mbox{ (perhaps creating it)}\\
\hspace{5mm}\mbox{ (choose the first } r \mbox{ where appending $x$ keeps } V_r \mbox{ increasing).}\\
\hspace{3mm}\IF \mbox{ (this is not possible for {\bf any} vector }  V_r, r \in S): \\
\hspace{10mm}\mbox{- Let }B_{x}\mbox{ be the set of elements of value }>x,\\
\hspace{10mm}\mbox{ in all vectors $V_r,$ $r\in S$ (clearly } B_{x}\neq \emptyset\mbox{)}\\
\hspace{10mm}\mbox{- Let } y = min\{B_{x}\}\mbox{ and }r\mbox{ the address of its vector.}\\
\hspace{10mm} \mbox{- Replace }y\mbox{ by }x\mbox{ into }V_{r}\\
\hspace{10mm}\mbox{- } BUMP(y, r\cdot \Sigma_{k}) ~~~~~~~~~~~~\# \textit{bump $y$ into some child of $r$}
\label{sg}
\end{pseudocode} 
\end{center}

\section{Conclusion and Acknowledgments}

Our paper raises a large number of open issues. We briefly list a few: Rigorously justify Conjecture~\ref{main}. Study process $HAD_{k}$ and its variants \cite{leticia,cator-groenenboom}. Reconnect the theory to the analysis of {\it secretary problems} \cite{archibald2009hiring,broder2009hiring}. Find the distribution of $MHS_{k}[\pi]$. Obtain a hook formula. Define a version of Young tableaux related to process $HAD_{k}$. 

We plan to address some of these  in subsequent work. The most important open problem, however, is {\it the complexity of computing LHS.}

This research has been supported by CNCS IDEI Grant PN-II-ID-PCE-2011-3-0981 "Structure and computational difficulty in combinatorial optimization: an interdisciplinary approach".


\newcommand{\etalchar}[1]{$^{#1}$}

\section*{Appendix} 

\subsection{Proof of Theorem~\ref{ineq}}

\begin{itemize} 
\item[1.] 

For $k\geq2,$ consider the sequence $X=[1,k+1,k,k-1,\cdots,2]$. 

\begin{lemma} We have
\[
MHS_{1}(X)=k, MHS_{2}(X)=k-1, \ldots, MHS_{k}(X)=1.
\]
\end{lemma}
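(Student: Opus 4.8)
The plan is to prove the single uniform statement $MHS_{j}(X) = k-j+1$ for every $1\le j\le k$, which specializes to the claimed chain $MHS_{1}(X)=k,\dots,MHS_{k}(X)=1$. I would do this by establishing matching upper and lower bounds, both of which rest on one structural observation about $X$.

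The crux is the following fact. Write $X_{0}=1$ and $X_{i}=k+2-i$ for $1\le i\le k$, so that $X_{1},\dots,X_{k}$ is the strictly decreasing run $k+1,k,\dots,2$; call these the \emph{tail elements}. Then the only element preceding $X_{i}$ (for $i\ge 1$) whose value is $\le X_{i}$ is $X_{0}=1$: each of $X_{1},\dots,X_{i-1}$ is strictly larger than $X_{i}$, while $1$ is strictly smaller. Consequently, in any min-heap built on a subsequence $S$ of $X$, a tail element $X_{i}$ can acquire a parent only if $1\in S$, and in that case the parent must be $1$ itself. Since $1$ is the global minimum occurring first, the min-heap property forces it to be the root of whichever heap contains it; hence every tail element lying in the same subsequence as $1$ must be a \emph{direct child} of $1$ (it cannot be pushed to depth $\ge 2$, as its only admissible parent is the root).

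For the lower bound I would fix any partition of $X$ into $m$ $j$-heapable subsequences. The subsequence containing $1$ holds at most $j$ tail elements, since all of them are children of the root $1$ and the arity is $j$. Every other subsequence omits $1$, so each of its tail elements would have to be a root; as a single heap has exactly one root, each such subsequence contains at most one tail element. There are exactly $k$ tail elements, whence $k\le j+(m-1)$, i.e. $m\ge k-j+1$, giving $MHS_{j}(X)\ge k-j+1$.

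For the matching upper bound I would exhibit an explicit decomposition: take $1$ as the root of a $j$-ary heap whose $j$ children are the tail elements $k+1,k,\dots,k-j+2$, and place each of the remaining $k-j$ tail elements in its own singleton heap. This uses $1+(k-j)=k-j+1$ heaps and is plainly valid, since $1$ lies below all its children and occurs first. Combining the two bounds yields $MHS_{j}(X)=k-j+1$. I do not expect a serious obstacle once the parent-uniqueness observation is in place; the only points demanding care are the justifications that $1$ is forced to be the root of its heap (the min-heap property together with $1$ being the unique global minimum) and that tail elements sharing a heap with $1$ cannot descend below depth one.
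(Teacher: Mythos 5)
Your proof is correct, but it takes a genuinely different route from the paper's. The paper proves this lemma by invoking the GREEDY algorithm, whose optimality was established earlier via the domination lemma (Lemma~\ref{dom}): it simply runs GREEDY with arity $j$ on $X=[1,k+1,k,\ldots,2]$ and records the resulting decomposition --- the heap $[1,k+1,k,\ldots,k-j+2]$ plus $k-j$ singletons --- so the matching lower bound $MHS_{j}(X)\geq k-j+1$ comes for free from the optimality theorem. You exhibit the same decomposition for the upper bound, but you replace the appeal to greedy optimality with a self-contained counting argument: each tail element's only admissible parent is $1$ (all earlier tail elements are strictly larger, and a parent must both precede it and be no larger), $1$ is forced to be a root since it occurs first, so the heap containing $1$ holds at most $j$ tail elements as direct children of the root, every heap avoiding $1$ holds at most one tail element (which must itself be a root), and hence $k\leq j+(m-1)$. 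What your approach buys is independence from the machinery of Section~3: the lemma becomes elementary, and the uniform formula $MHS_{j}(X)=k-j+1$ is established without knowing that GREEDY computes $MHS_{k}$ at all. What the paper's approach buys is brevity --- once the domination lemma is in place, the decompositions produced by GREEDY are themselves certificates of optimality, and no separate lower-bound argument is needed. Both arguments are sound; note, incidentally, that the paper's listing contains a typo ($H_{k-i+1}=[k+2]$ should read $H_{k-i+1}=[2]$), which your explicit construction avoids.
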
 
\begin{proof} 
Applying the Greedy algorithm we obtain the following heap decompositions: 
\begin{itemize}
\item ${\bf MHS_{1}(X)=k:}$ $H_1=[1,k+1]$, $H_2=[k]$, $H_3=[k-1]$, 
\ldots, $H_k=[2]$.
\item ${\bf MHS_{2}(X)=k-1:}$ $H_{1}=[1,k+1,k]$, $H_2=[k-1]$, $H_3=[k-2]$, \ldots, $H_{k-1}= [2]$.
\\ $\vdots$
\item ${\bf MHS_{i}(X)=k-i+1:}$ $H_{1}=[1,k+1,k,\ldots ,k-i+2]$, $H_2=[k-i+1]$, $H_3=[k-i]$, \ldots, $H_{k-i+1}= [k+2]$.
\\ $\vdots$
\item ${\bf MHS_{k}(X)=1:}$ $H_1=[1,k+1,k,\cdots,2].$ 
\end{itemize}
\end{proof}\qed
\item[2. ] 
Let $k,n\geq2$. Define sequence
\begin{eqnarray*}
X^{(k,n)} & = & [1, \\ 
 &  & (2+(k-1)),k,\dots,2,\\
 &  & (3+2(k-1)+(k-1)^{2}),(k+k^{2}),\dots,(2+k),\\
 &  & \vdots\\
 &  & \sum_{i=0}^{n}(n+1-i)(k-1)^{i},\dots,1+\sum_{i=0}^{n-1}(n-i)(k-1)^{i}]\\
\end{eqnarray*}
in other words
$X^{(k,n)}=[1,X_{1},X_{2},\dots,X_{n}],$ where for each $1\leq t\leq n$
the subsequence $X_{t}$ is $X_{t}=[\sum_{i=0}^{t}(t+1-i)(k-1)^{i},\,
,\dots,\,1+\sum_{i=0}^{t-1}(t-i)(k-1)^{i}]$. $X_{t}$ has 
 $(k-1)^{t}+(k-1)^{t+1}+\ldots + 1=\frac{(k-1)^{t+1}-1}{k-2} $ many elements. 

We can see that this sequence is $k$-heapable, thus $MHS_{k}(X)=1$: $|X_{t}|= (k-1) |X_{t-1}|+1< k|X_{t}|,$ and every number in $X_{t}$ is larger than every number in $X_{t-1}$. Thus we can arrange the $X_{t}$'s on (incomplete)  heap levels, with every node in $X_{t}$ a child of some node in $X_{t-1}$. 
\begin{theorem}
We have
\begin{eqnarray*}
MHS_{k-1}(X^{(k,n)}) & = & n+1.\\
\end{eqnarray*}
\end{theorem}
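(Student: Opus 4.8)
The plan is to run the algorithm GREEDY from Algorithm~\ref{fig:greedy}, which (as established above) computes $MHS_{k-1}$ exactly, on the sequence $X^{(k,n)}$, and simply to count the number of trees it creates. First I would record the combinatorial structure of the input: $X^{(k,n)}$ is the concatenation $X_0X_1\cdots X_n$ of $n+1$ blocks, where $X_0=[1]$ and each $X_t$ is a run of \emph{consecutive} integers listed in \emph{strictly decreasing} order, with every element of $X_t$ exceeding every element of $X_{t-1}$; the block sizes obey $|X_0|=1$ and the recurrence $|X_t|=(k-1)|X_{t-1}|+1$. I would work from this recurrence rather than its closed form, so that the argument also covers $k=2$.

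The key structural observation concerns how elements of a single block can be placed by GREEDY. Because each tree is a $(k-1)$-ary \emph{min}-heap and the blocks are increasing, two elements of the same block can never stand in a parent--child relation: the later element, being smaller (the block decreases), would be forced to be the child of an earlier, larger one, violating the heap order. Consequently, when GREEDY processes $X_t$, the free slots \emph{created during} the processing of $X_t$ all carry values lying in $X_t$, and are therefore too large to receive any \emph{subsequent} (smaller) element of the same block; only the slots present \emph{before} $X_t$ is processed can absorb elements of $X_t$. Moreover all those pre-existing slots carry values in $X_0\cup\cdots\cup X_{t-1}$, hence strictly below every element of $X_t$, so each of them can receive \emph{any} element of $X_t$.

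Let $f_t$ be the number of free slots just before GREEDY processes $X_t$, with $f_0=0$. By the observation, GREEDY fills pre-existing slots until they are exhausted and opens a new tree for each remaining element; since placing a node consumes the one slot it fills (if any) and creates $k-1$ fresh slots, I would establish by induction the two-line invariant
\[
f_t=(k-1)\,|X_{t-1}|\ \ (t\ge 1),\qquad |X_t|-f_t=1.
\]
Indeed $f_1=(k-1)|X_0|=k-1$, and assuming $f_t=(k-1)|X_{t-1}|$ the recurrence gives $|X_t|-f_t=\big((k-1)|X_{t-1}|+1\big)-(k-1)|X_{t-1}|=1$, so exactly one element of $X_t$ overflows into a new tree while the other $f_t$ elements consume every pre-existing slot; the $|X_t|$ nodes just placed contribute $(k-1)|X_t|$ new slots and none of these are consumed during $X_t$, whence $f_{t+1}=f_t-f_t+(k-1)|X_t|=(k-1)|X_t|$, closing the induction. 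Thus $X_0$ opens one tree (its root $1$) and each of $X_1,\dots,X_n$ contributes exactly one overflow, for a total of $n+1$ trees; optimality of GREEDY then yields $MHS_{k-1}(X^{(k,n)})=n+1$.

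The step I expect to require the most care is the structural observation, namely making fully rigorous that, block by block, \emph{only} the slots inherited from earlier blocks are usable. Relying on the already-proved optimality of GREEDY (via Lemma~\ref{dom}) is what lets me avoid an independent lower-bound argument: it suffices to compute GREEDY's output, and the slot bookkeeping above does exactly that. The remaining verifications --- the block-size recurrence, that $f_t<|X_t|$ so that an overflow genuinely occurs, and the consistency check against the case $n=1$, where $X^{(k,1)}=[1,k+1,\dots,2]$ and the count gives $2=k-(k-1)+1$ in agreement with Part~1 --- are routine.
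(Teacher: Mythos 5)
Your proof is correct, and it shares the paper's overall strategy: appeal to the optimality of GREEDY (Lemma~\ref{dom}) and count the heaps it creates block by block, the key fact in both arguments being that each block $X_t$, $t\geq 1$, spawns exactly one new heap. The difference is in the inductive invariant used to establish that fact. The paper's induction describes the placement explicitly --- the elements of $X_t$ fill level $t$ of $H_1$, level $t-1$ of $H_2$, \ldots, level $1$ of $H_t$, leaving exactly one element to found $H_{t+1}$ --- and verifies the overflow count using the closed-form block sizes $\frac{(k-1)^{t+1}-1}{k-2}$. You instead track only the aggregate number of free slots, via the invariant $f_t=(k-1)|X_{t-1}|$, $|X_t|-f_t=1$, justified by the observation that two elements of a strictly decreasing block can never stand in a parent--child relation, so slots opened during a block are unusable within it. This coarser bookkeeping buys you two things: first, your argument is indifferent to which eligible slot GREEDY selects, so it is valid under either tie-breaking rule appearing in the paper (the pseudocode's ``slot with the lowest value'' and Lemma~\ref{dom}'s ``largest slot with value $\leq x$'', which are not the same rule); second, working from the recurrence $|X_t|=(k-1)|X_{t-1}|+1$ rather than the closed form makes the proof literally valid at $k=2$, where the paper's expressions with denominator $k-2$ degenerate (the paper's $t=2$ arithmetic also contains slips, e.g.\ $|X_2|$ is $k^2-k+1$ rather than $k^2$, which your recurrence-based count avoids). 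What the paper's finer induction buys in exchange is an explicit picture of the final forest --- complete $(k-1)$-ary heaps of strictly decreasing depths --- which your slot count does not provide, but which the theorem does not require.
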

\begin{proof}

We apply the GREEDY algorithm. After sequence $X_{1}$ two $(k-1)$-heaps are created. $H_{1}$ has two full levels, $H_{2}$ contains only the root 2. Sequence $X_{2}$ has length $k^2$. $(k-1)^2$ elements go on the third level of $H_{1}$. $k-1$ elements go on the second level of $X_{2}$. The remaining $k^2-(k-1)^{2}-2(k-1)=1$ element starts a new heap $H_{3}$. 

By induction we easily prove the following
\begin{lemma}
For every $t\geq 1$, the $\frac{(k-1)^{t+1}-1}{k-2}$ elements of $X_{t}$ go via GREEDY as follows: 
\begin{itemize}
\item $(k-1)^{t}$ of them go on level $t$ of $H_{1}$, 
\item $(k-1)^{t-1}$ of them go on level $t-1$ of $H_{2}$,
\item $\ldots$
\item $k-1$ of them go on the first level of $H_{t}$.   
\end{itemize} 
The remaining $\frac{(k-1)^{t+1}-1}{k-2}-\sum_{i=1}^{t} (k-1)^{i}=1$ element starts a new heap $H_{t+1}$.  
\end{lemma} 
\end{proof}

\end{itemize}\qed

\subsection{Proof of Theorem~\ref{limit}}

{\bf First sequence:} Define $a_{n}$ to be the expected cardinality of the multiset of {\bf slots} (particles lifelines in process $HAD_{2}$)) at moment $n$.  Clearly $a_{n}/n=2l(n)-c(n)$. Also, given $Z=(Z_{0},Z_{1},\ldots, Z_{n-1})$ a finite trajectory in [0,1] {\bf and an initial set of slots $T$},
denote by $s(Z;T)$ the multiset of {\bf particles (slots) added during $Z$} that are still alive at the end of the trajectory $Z$, if {\bf at time $t=0$ the process started with the slots in $T$} (omitting the second argument if $T=\emptyset$), and $a(Z;T)=|s(Z;T)|$.
Finally denote by $v(Z;T)$ the submultiset of $s(Z;T)$ consisting of elements with multiplicity two, and by $l(Z;T)=|v(Z;T)|$.

Subadditivity of $a_{n}$ will follow from the fact that the  property holds {\it on each trajectory}: If $X=(X_{0},\ldots, X_{n-1})$ and $Y_{m}=(X_{n}\ldots X_{n+m-1})$ then in fact we can show that
\begin{equation} \label{ineq3}
 a(XY_{m})\leq a(X)+a(Y_{m}). 
\end{equation} 
Clearly $a_{n}=E_{|X|=n}[a(X)]$ so~(\ref{ineq}) implies that $a_{n}$ is subadditive. It turns out that, together with~(\ref{ineq3}), we will need to simultaneously prove that 
\begin{equation}
s(Y_{m}) \subseteq s(Y_{m};s(X))\mbox{ (as multisets)}
\label{ineq2}
\end{equation} 
We prove~(\ref{ineq3}) and~(\ref{ineq2}) by induction on $m=|Y_{m}|$. Clearly the inclusion is true if $m=0$. Let $Y_{m}=Y_{m-1}X_{n+m-1}$ and 
$s(XY_{m})=W_{m}\cup Z_{m}$, with $W_{m}=s(X)\cap s(XY_{m})$, $Z_{m}=Y_{m}\cap s(XY_{m})$.

$s(XY_{m})$ modifies $s(XY_{m-1})$ by adding two copies of $X_{n+m-1}$ to $W_{m}$ and, perhaps, erasing some $p_{m}$, the largest element (if any) in $s(XY_{m-1})$ smaller or equal to $X_{n+m-1}$. Thus $a(XY_{m})-a(XY_{m-1})\in \{1,2\}$. 
 
Similarly, $s(Y_{m})$ modifies $s(Y_{m-1})$ by adding two copies of $X_{n+m-1}$ and, perhaps, erasing some $r_{m}$, the largest element (if any) {\bf in $s(Y_{m-1})$} smaller or equal to $X_{n+m-1}$. Thus $a(Y_{m})-a(Y_{m-1})\in \{1,2\}$. 

All that remains in order to prove that $a(XY_{m})-a(Y_{m})\leq 
a(XY_{m-1})-a(Y_{m-1})$ (and thus establish inequality~(\ref{ineq}) inductively for $m$ as well) is that 
$(a(Y_{m})-a(Y_{m-1})=1) \Rightarrow (a(XY_{m})-a(XY_{m-1})=1).$ 
This follows easily from inductive hypothesis~(\ref{ineq2}) for $m-1$: if $a(Y_{m})-a(Y_{m-1})=1$ then some element in $s(Y_{m-1})$ is less or equal to $X_{n+m-1}$. The same must be true for $s(Y_{m-1};s(X))$ and hence for $s(XY_{m-1})$ as well 
(noting, though, that $p_{m}$ may well be an element of $X$).
Now we have to show that~(\ref{ineq2}) also remains true: clearly the newly added element, $X_{n+m-1}$, has multiplicity two in both $s(Y_{m})$ and $s(Y_{m};s(X))$. Suppose we erase some element $r_{m}$ from $s(Y_{m-1})$. Then $r_{m}$ belongs to 
$s(Y_{m-1};s(X))$, has multiplicity at least one there, and is {\it the largest element smaller or equal to $X_{n+m-1}$ in 
$s(Y_{m-1};s(X))\cap s(Y_{m-1})$}. Thus, when going from $s(Y_{m-1};s(X))$ to $s(Y_{m};s(X))$ we either erase one copy of 
$p_{m}$ or do not erase nothing (perhaps we erased some element in $s(X)$, which is not, however, in $s(Y_{m-1};s(X))$) Suppose, on the other hand that no element in $s(Y_{m-1})$ is smaller or equal to $X_{n+m-1}$. There may be such an erased element $p_{m}$ in $s(Y_{m-1};s(X))$, but {\it it certainly did not belong to $s(Y_{m-1})$}. In both cases we infer that relation 
$s(Y_{m}) \subseteq s(Y_{m};s(X))$ is true. 

{\bf Second sequence:} 

The proof is very similar to the first one: 
Define, in a setting similar to that of the first sequence, $u(X,T)$ to be the cardinality of the submultiset of $s(Z,T)$ 
of elements with multiplicity two. Define $a_{n}$ to be the expected number of elements with multiplicity two at stage $n$. That is, $a_{n}=E_{|X|=n}[u(X)]=l(n)-c(n)$. We will prove by induction on $m$ that if $X=(X_{0},\ldots, X_{n-1})$ and $Y_{m}=(X_{n}\ldots X_{n+m-1})$ then 
\begin{equation} \label{ineq4}
 u(XY_{m})\leq u(X)+u(Y_{m}). 
\end{equation} 

The result is clear for $m=0$. In the general case, $m\geq 1$, 
$v(XY_{m})$ modifies $v(XY_{m-1})$ by adding $X_{n+m-1}$ and, perhaps, erasing some $p_{m}$, the largest element (if any) in $s(XY_{m-1})$ smaller or equal to $X_{n+m-1}$ if this element is in $v(XY_{m-1})$. Thus $u(XY_{m})-u(XY_{m-1})\in \{0,1\}$. 
Similarly, $u(Y_{m})$ modifies $u(Y_{m-1})$ by adding $X_{n+m-1}$ and, perhaps, erasing some $r_{m}$, the largest element (if any) {\bf in $s(Y_{m-1})$}, if this element is smaller or equal to $X_{n+m-1}$. Thus $u(Y_{m})-u(Y_{m-1})\in \{0,1\}$. 

If $u(XY_{m-1})\leq u(X)-1+u(Y_{m})$ then clearly $u(XY_{m})-u(Y_{m})\leq u(X)$. The only problematic case may be when $u(XY_{m-1})-u(Y_{m-1})=u(X)$, $u(XY_{m})-u(XY_{m-1})=1$, 
$u(Y_{m})-u(Y_{m-1})=0$. But this means that $r_{m}$ exists (and is erased from $v(Y_{m-1})$). Since $s(Y_{m-1})\subseteq s(Y_{m-1};s(X))$, $r_{m}$ must be erased from $s(Y_{m-1};s(X))$. 
In other words, the bad case above cannot occur.

\subsection{Proof of Theorem~\ref{unif}}

We use essentially the classical proof based on the {\it hook walk} from 
\cite{hook-walk}, slightly adapted to our framework: Define for a heap  table $T$ with $n$ elements
\[
F_{T}=\frac{n!}{\prod_{(\alpha,i)\in dom(T)} H_{\alpha,i}}
\]
and $C(T)$, the set of {\it corners of $T$}, to be the set of cells $(\alpha,i)$ of $T$ with $H_{\alpha,i}=1$. Given $\gamma\in C(T)$ define 
$T_{\gamma}=T\setminus \{\gamma\}$. We want to prove that 
\begin{equation} 
\sum_{\gamma\in C(T)} \frac{F_{T_{\gamma}}}{F_{T}}\geq 1. 
\label{fin}
\end{equation} 
(of course, for $k=1$ we can actually prove equality in Formula~\ref{fin} above). 
This will ensure (by induction upon table size) the truth of our lower bound. 

We need some more notation: for $(\alpha,i)\in dom(T)$, denote 
\begin{equation} 
Heap_{\alpha,i}=\{(\beta,i)\in dom(T):\alpha \sqsubseteq \beta \}
\end{equation} 
the {\it heap hook of $(\alpha,i)$}, and by 
\begin{equation} 
Vec_{\alpha,i}=\{(\alpha,j)\in dom(T):i\leq j \}
\end{equation} 
its {\it vector hook} (thus $H_{\alpha,i}=|Heap_{\alpha,i}|+|Vec_{\alpha,i}|-1$).

By applying formulas for $F_{T},F_{T_{\gamma}}$ we get 
\begin{align}
\frac{F_{T_{\gamma}}}{F_T}& =\frac{1}{n}\cdot \prod_{\gamma\in Heap_{\beta,j}} \frac{H_{\beta,j}}{H_{\beta,j}-1}\cdot \prod_{\gamma\in Vec_{\beta,j}} \frac{H_{\beta,j}}{H_{\beta,j}-1} \nonumber \\
& = \frac{1}{n}\cdot \prod_{\gamma\in Heap_{\beta,j}} (1+\frac{1}{H_{\beta,j}-1})\cdot \prod_{\gamma\in Vec_{\beta,j}} (1+\frac{1}{H_{\beta,j}-1})\label{hw}
\end{align}

We consider the {\it hook walk on $T$}, defined in Figure~(\ref{hw:k}).

\begin{figure}
\begin{framed} 
\begin{itemize} 
\item Choose (uniformly at random) a cell $(\alpha_{1},i_{1})$ of $T$. 
\item let $i=1$. 
\item while (($\alpha_{i},t_{i})$ is not a corner of $T$): 
\item \hspace{5mm} Choose $(\alpha_{i+1},t_{i+1})$ uniformly at random from $H((\alpha_{i},t_{i}))\setminus \{(\alpha_{i},t_{i})\}$.
\item \hspace{5mm} Let $i=i+1$. 
\item Return corner $(\alpha_{n},i_{n})$. 
\vspace{-0.7cm}
\end{itemize}
\end{framed} 
\vspace{-0.5cm} 
\caption{The hook walk.}
\label{hw:k}
\end{figure} 

Interpret terms from the product in formula~(\ref{hw}) as probabilities of paths in the hook walk, ending in corner $\gamma$, as follows: 
\begin{itemize}
\item Choose $(\alpha_{1},i_{1})$ uniformly at random from $T$ (i.e. with probability 1/n).
\item Terms $(\beta,i)$ in the first product whose contribution is $\frac{1}{H_{\beta,i}-1}$ correspond to cells where the walk makes "hook moves" towards $\gamma$. 
\item Terms $(\beta,i)$ in the second product whose contribution is $\frac{1}{H_{\beta,i}-1}$ correspond to cells where the walk makes "vector moves" towards $\gamma$. 
\end{itemize} 
Indeed, consider a path $P:(\alpha,i):= (\alpha_{1},i_{1})\rightarrow (\alpha_{2},i_{2})\rightarrow \ldots \rightarrow (\alpha_{n},i_{n})=\gamma$. Define its {\it hook projection} to be 
set $A=\{\alpha_{1},\alpha_{2}, \ldots, \alpha_{n}\}$ and its vector projection to be the set $B=\{i_{1},i_{2},\ldots, i_{n}\}$. 

Just as 
in \cite{hook-walk}, given set of words $A=\{\alpha_{1},\ldots \alpha_{m}\}$, with $\alpha_{1}=\alpha$ and $\alpha_{i}\sqsubset \alpha_{i+1}$ and set of integers $B=\{i_{1},\ldots, i_{r}\}$ with $i_{1}=i$ and $i_{l}<i_{l+1}$, 
the probability $p(A,B)$ that the hook walk  has the hook(vector) projections $A(B)$ (thus starting at $(\alpha_{1}, i_{1})$) is 
\begin{equation} 
P(A,B)\leq \prod_{\beta \in A, \beta \neq \alpha_{m} } (1+\frac{1}{H_{\beta,i_{r}}-1})\cdot \prod_{i\in B, i\neq i_{r}} (1+\frac{1}{H_{\alpha_{m},i}-1})
\label{eq-pi}
\end{equation} 
Indeed, as in \cite{hook-walk} 
\begin{align}
P(A,B) & =\frac{1}{H_{\alpha_{1},i_{1}}-1} [ P(A-\{\alpha_{1}\},B)+ P(A,B-\{i_{1}\})]\leq \nonumber\\ 
                & \leq \frac{1}{H_{\alpha_{1},i_{1}}-1} [(H_{\alpha_{1},i_{r}}-1)+ (H_{\alpha_{m},i_{1}}-1)]\cdot (RHS)
\label{eq-p}
\end{align}
where $(RHS)$ is the right-hand side product in equation~(\ref{eq-pi}), and in the second row we used the inductive hypothesis.  

For $k=1$, in \cite{hook-walk} we would use an equality of type $H_{\alpha_{1},i_{1}}-1=(H_{\alpha_{1},i_{r}}-1)+ (H_{\alpha_{m},i_{1}}-1)$. For $k\geq 2$ such an equality is no longer true, and we only have inequality 
\begin{equation} 
H_{\alpha_{1},i_{1}}-1 \geq (H_{\alpha_{1},i_{r}}-1)+ (H_{\alpha_{m},i_{1}}-1)
\label{just-ineq}
\end{equation} 
leading to a proof of equation~(\ref{eq-pi}). 

To justify inequality~(\ref{just-ineq}), note that, by property (b) of heap tableaux, since $\alpha_{1}\sqsubset \alpha_{m}$, 
\begin{equation} 
|Vec(\alpha_{m},i_{1})|\leq |Vec(\alpha_{1},i_{1})|
\label{eq-vec}
\end{equation} 
On the other hand 
\begin{equation}
|Heap(\alpha_{1},i_{1})|\geq |Heap(\alpha_{1},i_{r})|+(|Heap(\alpha_{m},i_{1})|-1).
\label{eq-heap}
\end{equation} 
This is true by monotonicity property (c) of heap tableaux: every path present in the heap $H_{r}$ rooted at $(\alpha_{1},i_{r})$ is also present in the heap $H_{1}$ rooted at $(\alpha_{1},i_{1})$. Heap $H_{r}$ is empty below node $\gamma=(\alpha_{m},i_{r})$, but $H_{1}$ contains the subheap rooted at $(\alpha_{1},i_{r})$ (of size $|Heap(\alpha_{1},i_{r})|-1)$ {\it any maybe some other subheaps,} rooted at nodes $w\in H_{1}$ whose correspondent in $H_{r}$ has no descendents.  
Summing up equations~(\ref{eq-vec}) and~(\ref{eq-heap}) we get our desired inequality~(\ref{just-ineq}). Example in Figure ~\ref{youngContraexemplu} shows that inequality ~(\ref{just-ineq}) can be strict: The hook length of $H_{1,\lambda}-1 = 7$ but $H_{2,\lambda}-1=2-1$ and $H_{1,0}-1=2-1$. The reason is that the grayed cells are not counted in the hook of $(1,0)$, but they belong to the hook of $(1,\lambda)$. 

\begin{figure}[ht]
\begin{center} 
\begin{minipage}{.5\textwidth}

\[
\gyoung(::\lambda:0:1:<00>:<1^2>:<\cdots>:<1^3>:<\cdots>:<1^4>:<\cdots>:<1^5>,:1;8!\blue2!\white!\grey5!\white;<\perp>!\grey4!\white;<\perp>!\grey3!\white;<\perp>!\grey2!\white;<\perp>!\grey1!\white,:2!\blue2!\white;1)
\]

\end{minipage}  
\end{center}
\caption{Example showing that inequality (\ref{just-ineq}) is strict.}
\label{youngContraexemplu}
\end{figure}

Finally, adding up suitable inequalities~(\ref{eq-pi}) we infer that $s_{\gamma}$, the probability that the walk ends up at $\gamma$, equal to  
\[
s_{\gamma}=\frac{1}{n}\sum p(A,B)
\]
(the sum being over all suitable sets $A,B$) is less or equal than the expansion~(\ref{hw}) of $\frac{F_{T_{\gamma}}}{F_{T}}$. Since the sum of probabilities adds up to 1, inequality~(\ref{fin}) follows.  

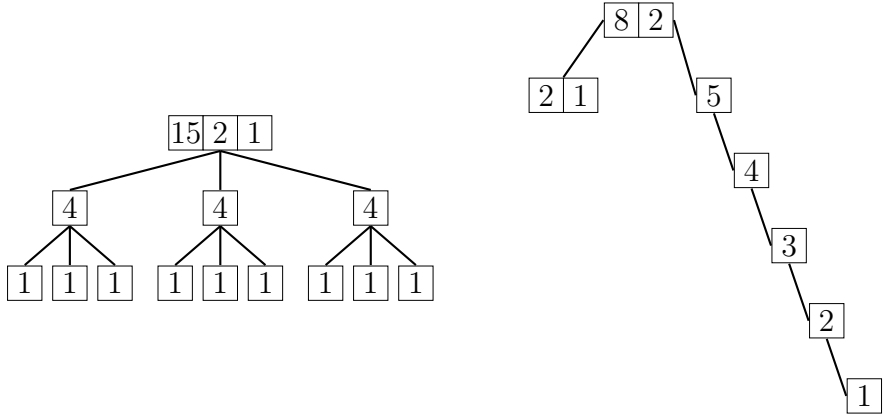
\begin{figure}[ht]
\begin{minipage}{.5\textwidth}
\begin{tikzpicture}
\node[inner sep=0pt] (A) at (3,1)
    {\gyoung(;<15>;2;1)};
\node[inner sep=0pt] (B) at (1,0)
    {\gyoung(;4)};
\node[inner sep=0pt] (C) at (3,0)
    {\gyoung(;4)};
\node[inner sep=0pt] (D) at (5,-0)
    {\gyoung(;4)};
    
\node[inner sep=0pt] (E) at (0.4,-1)
    {\gyoung(;1)};
\node[inner sep=0pt] (F) at (1,-1)
    {\gyoung(;1)};
\node[inner sep=0pt] (G) at (1.6,-1)
    {\gyoung(;1)};
    
\node[inner sep=0pt] (H) at (2.4,-1)
    {\gyoung(;1)};
\node[inner sep=0pt] (I) at (3,-1)
    {\gyoung(;1)};
\node[inner sep=0pt] (J) at (3.6,-1)
    {\gyoung(;1)};
    
\node[inner sep=0pt] (K) at (4.4,-1)
    {\gyoung(;1)};
\node[inner sep=0pt] (L) at (5,-1)
    {\gyoung(;1)};
\node[inner sep=0pt] (M) at (5.6,-1)
    {\gyoung(;1)};
\draw[-,thick] (A.south) -- (B.north);
\draw[-,thick] (A.south) -- (C.north);
\draw[-,thick] (A.south) -- (D.north);

\draw[-,thick] (B.south) -- (E.north);
\draw[-,thick] (B.south) -- (F.north);
\draw[-,thick] (B.south) -- (G.north);

\draw[-,thick] (C.south) -- (H.north);
\draw[-,thick] (C.south) -- (I.north);
\draw[-,thick] (C.south) -- (J.north);

\draw[-,thick] (D.south) -- (K.north);
\draw[-,thick] (D.south) -- (L.north);
\draw[-,thick] (D.south) -- (M.north);
\end{tikzpicture}
\end{minipage} 
\begin{minipage}{.5\textwidth}
\begin{tikzpicture}
\node[inner sep=0pt] (A) at (4,1.5)
    {\gyoung(;8;2)};
\node[inner sep=0pt] (B) at (3,0.5)
    {\gyoung(;2;1)};
\node[inner sep=0pt] (C) at (5,0.5)
    {\gyoung(;5)};
\node[inner sep=0pt] (D) at (5.5,-0.5)
    {\gyoung(;4)};
\node[inner sep=0pt] (E) at (6,-1.5)
    {\gyoung(;3)};
\node[inner sep=0pt] (F) at (6.5,-2.5)
    {\gyoung(;2)};
\node[inner sep=0pt] (G) at (7,-3.5)
    {\gyoung(;1)};
\draw[-,thick] (A.west) -- (B.north);
\draw[-,thick] (A.east) -- (C.west);
\draw[-,thick] (C.south) -- (D.west);
\draw[-,thick] (D.south) -- (E.west);
\draw[-,thick] (E.south) -- (F.west);
\draw[-,thick] (F.south) -- (G.west);
\end{tikzpicture}
\end{minipage}

\caption{(a). Example $T_{3,3}$. (b). Counterexample $W_{4}$. The hook formula is tight for heap tableau (a). but not tight for (b). In both cases cell contents represent the hook lengths.} 
\label{hookex}
\end{figure}

Let us now deal with examples/counterexamples.  

First we present a set of arbitrarily large heap tableaux, different from both heap-ordered trees and Young tableaux, for which the hook inequality is tight: for $r\geq 2, k\geq 1$ consider heap  table $T_{r,k}$ (Fig.~\ref{hookex}(a)) to have $n=S_{k,r}+k-1$ nodes, distributed in a complete $k$-ary tree $H_{1}$ with $r$ levels $0,1,\ldots r-1$ and $S_{k,r}$ nodes, and then $k-1$ one-element heaps $H_{2},\ldots, H_{k}$. We employ notation 
\[
S_{k,l}=1+k+\ldots + k^{l-1}=\frac{k^{l}-1}{k-1}
\]
The number of ways to fill up such a heap tableau is ${{n-1}\choose {k-1}}\cdot N_{k,r}$, where $N_{k,r}$ is the number of ways to fill up a complete $k$-ary tree with $r$ levels. 
\[
N_{k,r}=\frac{S_{k,r}!}{\prod_{i=0}^{r-1} (S_{k,r-i})^{k^{i}}}
\]
This happens because for every subset $A$ of $\{2,\ldots ,n\}$ of cardinality $k-1$, element 1 together with those not in $A$ can be distributed in $H_{1}$ in $N_{k,r}$ ways. 

Putting all things together, the total number of fillings of $T_{r,k}$ is 
\[
\frac{(S_{k,r}+k-2)!\cdot (S_{k,r})!}{(k-1)!\cdot (S_{k,r}-1)!\cdot S_{k,r}\cdot \prod_{i=1}^{r-1} (S_{k,r-i})^{k^{i}}}= \frac{(n-1)!}{(k-1)!\cdot \prod_{i=1}^{r-1} (S_{k,r-i})^{k^{i}}}
\]

Hook lengths are $1,2,\ldots, k-1$ (for the nodes in the one-element heaps), $(S_{k,r-i})^{k^{i}}$ (for the non-root nodes in $H_{1}$) and $n$ (for the root node of $H_{1}$). The resulting formula

\begin{equation} 
\frac{n!}{(k-1)!\cdot n\cdot \prod_{i=1}^{r-1} (S_{k,r-i})^{k^{i}}}= \frac{(n-1)!}{(k-1)!\cdot \prod_{i=1}^{r-1} (S_{k,r-i})^{k^{i}}}
\end{equation}

is the same as the total number computed above. 

Now for the counterexamples: consider heap tableaux $W_{r}$ (Fig.~\ref{hookex}(b), identical to the heap tableau in Fig.~\ref{youngContraexemplu}) defined as follows: $W_{r}$ consists of two heaps, $H_{1}$ with cells with addresses
$(1,\lambda), (1,0)$, $(1,1),(1,11), \ldots,$ $(1,1^{2r-3})$, and $H_{2}$ with cells with addresses $(2,\lambda),(2,0)$. $W_{r}$ has $n=2r+1$ nodes. 

Hook values of cells in $H_{1}$ are $2r,2,2r-3,2r-4, \ldots, 1$. 
Hook values of cells in $H_{2}$ are $2,1$, respectively. Thus the hook formula predicts 
\[
\frac{(2r+1)!}{2\cdot 2\cdot 2r \cdot (2r-3)\cdot (2r-4)\cdot \ldots \cdot 1}=\frac{(2r+1)(2r-1)(2r-2)}{4}
\]
ways to fill up the table. If $r$ is even then the number above is {\bf not} an integer, so the hook formula cannot be exact for these tableaux.

\subsection{Proof of Theorem~\ref{sch-heap}}

We prove that inserting a single integer element $x$ into a heap tableau $T$ results in another heap tableau $T\leftarrow x$. Therefore inserting a permutation $X$ will  result in a heap tableau. 

By construction, when an element is appended to a vector, the vector remains increasing. Also, if an element $y$ bumps another element $z$ from a vector $V$ (presumed nondecreasing) then $z$ is the smallest such element in $V$ greater than $y$. Thus, replacing $z$ by $y$ preserves the nondecreasing nature of the vector $V$.

All we need to verify is that min-heap invariant (b) (initially true for the one-element heap tableau) also remains true when inserting a new element $x$. 

The case when $x$ is appended to $V_{\lambda}$ is clear: since invariant (b) was true before inserting $x$ for every address $r$ we have $|V_{\lambda}|\geq |V_r|.$ See the example above when we append $x=9.$
Thus what we are doing, in effect, by appending $x$ to $V_{\lambda}$ is start a new heap. 

Suppose instead that inserting $x$ bumps element $x_{1}$ from $V_{\lambda}$. 
Necessarily $x<x_{1}$. Suppose $i$ is the position of $x_{1}$ in $V_{\lambda}$, that is $x_{1}$ was the root of heap $H_{i}$. By reducing the value of the root, the heap $H_{i}$ still verifies the min-heap invariant. Now suppose $x_{1}$ bumps element $x_{2}$. We claim that $x_{2}$ has rank at most $i$ in its vector. Indeed, the element with rank $i$ in the vector of $x_{2}$ was larger than $x_{1}$ (by the min-heap property of $H_{i}$). So $x_{2}$ must have had rank at most $i$. Let $j$ be this rank. 

\begin{figure}[ht]
\begin{center} 
\begin{minipage}{.35\textwidth}

\begin{tikzpicture}
\node[inner sep=0pt] (A) at (4,2)
   	{\gyoung(:::::<\downarrow x>::::,;;;;<\cdots>!\grey<x_1>!\white;<\cdots>;,:::::<i>::::)};
\node[inner sep=0pt] (B) at (2,0)
    {\gyoung(:::<\downarrow x_1>:::,;;<\cdots>!\grey<x_2>!\white;<\cdots>;;\cdots,:::<j>::<i>::)};
\node[inner sep=0pt] (C) at (6,0.4)
    {\gyoung(:)};
\node[inner sep=0pt] (D) at (1,-1.3)
    {\gyoung(:)};
\node[inner sep=0pt] (E) at (3,-1.5)
    {\gyoung(::\cdots,::)};
\node[inner sep=0pt] (F) at (2,-3)
    {\gyoung(:::<\downarrow x_{n}>:::,;;<\cdots>!\grey<x_n>,:::<s>)};
\node[inner sep=0pt] (G) at (4,-2.5)
    {\gyoung(:)};
    
\draw[-,thick] (A.south) -- (B.north);
\draw[-,thick] (A.south) -- (C.north);
\draw[-,thick] (B.south) -- (D.north);
\draw[-,thick] (B.south) -- (E.north);
\draw[-,thick] (E.south) -- (F.north);
\draw[-,thick] (E.south) -- (G.north);
\end{tikzpicture}

\end{minipage} 
\end{center}
\caption{Inserting $x$ and the bumps it determines.} 
\label{insert1} 
\end{figure}
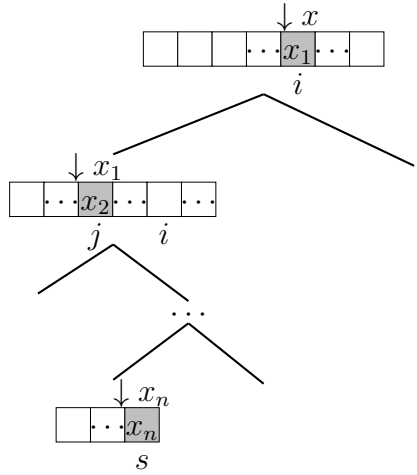

Since $x_{1}<x_{2}$, by replacing $x_{2}$ by $x_{1}$ the min-heap property is satisfied "below $x_{2}/x_{1}$". It is satisfied "above $x_{2}$" as well, since the parent of  $x_{2}$ either was (and still is) 
the root of $H_{j}$, a number less or equal to $x_{1}$ (in case $j<i$) or is $x$ (in case the rank of $x_{2}$ is exactly $i$). 

If $x_{2}$ bumps $x_{3},\ldots, $ etc we repeat the argument above on the corresponding sub-min-heap tableau. 

Suppose, finally, that element $x_{n}$, bumped from $V_{\alpha}$ by $x_{n-1}$, is appended to vector $V_{\beta}$. Let $s$ be the index of $x_{n}$ in $V_{\beta}$. We claim that $|V_{\alpha}|\geq s$. 

Indeed, $x_{n}$ is larger than the first $s-1$ elements of $|V_{\beta}|$. By the min-heap property, it is also larger than the initial $s-1$ elements of $|V_{\alpha}|$ as well. So its index in $V_{\alpha}$ before getting bumped could not have been less than $s$. That means that appending $x_n$ does not violate the min-heap invariant (b).

\subsection{Proof of Theorem~\ref{rs-heap}}

Given permutation $\sigma\in S_{n}$, denote by $P_{\sigma}$ the heap table obtained by applying the Schensted-HEAP$_{k}$ algorithm. 

Define a second heap table $Q_{\sigma}$ as follows: whenever we insert $\sigma(i)$ into $P$, we record the resulting sequence of bounces and {\bf insert i at the last place involved in the bounces.} 

\begin{example} Let $k=2$ and consider the permutation $\sigma=\begin{pmatrix}
1 & 2 & 3 & 4 & 5 & 6 \\
4 & 2 & 6 & 3 & 5 & 1 
\end{pmatrix}.$ 
The two corresponding heap tableaux are constructed below. For drawing convenience, during the insertion process they are not displayed in the heap-like form, but rather in the more compact Young-table equivalent format.  The resulting heap-tableaux are displayed in Figure~\ref{rs-result}. 

\begin{center} 
\begin{tabular}{|c|c|c|c|c|c|c|}
  \hline \hline
 &  $\leftarrow$ 4 & $\leftarrow$ 2 & $\leftarrow$ 6 & $\leftarrow$ 3 & $\leftarrow$ 5 & $\leftarrow$ 1 \\
 \hline \hline
$P_{\sigma}$: & $\gyoung(::\lambda,:1!\grey<4>)$ & $\gyoung(::\lambda:0,:1!\grey<2>;4)$ & $\gyoung(::\lambda:0,:1;2<4>,:2!\grey 6)$ & $\gyoung(::\lambda:0,:1;2;4,:2!\grey<3>;6)$ & $\gyoung(::\lambda:0,:1;2;4,:2;3;6,:3!\grey<5>)$ &  $\gyoung(::\lambda:0:1:,:1!\grey<1>!\white;4!\grey<2>!\white,:2;3;6,:3;5)$\\ 
\hline \hline
$Q_{\sigma}$: & $\gyoung(::\lambda,:1!\grey<1>)$ & $\gyoung(::\lambda:0,:1!\grey<1>;2)$ & $\gyoung(::\lambda:0,:1;1<2>,:2!\grey 3)$ & $\gyoung(::\lambda:0,:1;1;2,:2!\grey<3>;4)$ & $\gyoung(::\lambda:0,:1;1;2,:2;3;4,:3!\grey<5>)$ &  $\gyoung(::\lambda:0:1:,:1!\grey<1>!\white;2!\grey<6>!\white,:2;3;4,:3;5)$\\ 
  \hline \hline
\end{tabular}
\end{center} 
 
\end{example} 

\begin{figure}[ht]

\begin{minipage}{.5\textwidth}
\begin{tikzpicture}
\node[inner sep=0pt] (A) at (4,0)
    {\gyoung(1;3;5)};
\node[inner sep=0pt] (B) at (2,-1)
    {\gyoung(4;6)};
\node[inner sep=0pt] (C) at (6,-1)
    {\gyoung(2)};
\draw[-,thick] (A.south) -- (B.north);
\draw[-,thick] (A.south) -- (C.north);
\end{tikzpicture}
\end{minipage} 
\begin{minipage}{.5\textwidth}
\begin{tikzpicture}
\node[inner sep=0pt] (A) at (4,0)
    {\gyoung(1;3;5)};
\node[inner sep=0pt] (B) at (2,-1)
    {\gyoung(2;4)};
\node[inner sep=0pt] (C) at (6,-1)
    {\gyoung(6)};
\draw[-,thick] (A.south) -- (B.north);
\draw[-,thick] (A.south) -- (C.north);
\end{tikzpicture}
\end{minipage} 
\caption{(a). Heap-tableau $P_{\sigma}$. (b). (Standard) heap-tableau $Q_{\sigma}$.} 
\label{rs-result} 
\end{figure}
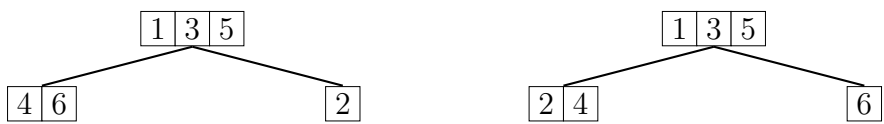

There are two things to prove about the algorithm outlined above:
\begin{itemize} 
\item[(i). ] For every permutation $\sigma$, $Q_{\sigma}$ is a heap tableau of the same shape as heap tableau $P_{\sigma}$. Moreover,
\begin{lemma} 
 $Q_{\sigma}$ is a heap tableau in standard form.  
\end{lemma}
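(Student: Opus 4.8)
The plan is to establish the three required properties of $Q_\sigma$ — same shape as $P_\sigma$, heap tableau, and standard — all from a single structural observation about Schensted-HEAP$_k$: each insertion of $\sigma(i)$ terminates in exactly one \emph{append} (the base case of BUMP), whereas every intermediate step is a \emph{replacement} that leaves the shape unchanged. Hence each step creates exactly one new cell, and recording $i$ there sets up a bijection between the $n$ steps and the $n$ cells of $dom(P_\sigma)$. This at once gives $dom(Q_\sigma)=dom(P_\sigma)$ (same shape) together with property (e), since the values $1,\dots,n$ are placed one per cell on distinct cells.

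First I would check that $Q_\sigma$ is a heap tableau, i.e. properties (b) and (c). Writing $\tau(r,a)$ for the creation time of a cell, so that $Q_\sigma(r,a)=\tau(r,a)$, property (c) (columnwise increasing) amounts to $\tau(r,b)<\tau(r,a)$ whenever $b<a$, which is immediate because a vector $V_r$ only grows by appending at its end and never shrinks, so shallower positions are created strictly earlier. Property (b) ($\sqsubseteq$-monotonicity) amounts to $\tau(q,a)<\tau(r,a)$ whenever $q\sqsubset r$. Here I would invoke Theorem~\ref{sch-heap}: since $P_\sigma$ satisfies (b) at \emph{every} stage, at the instant cell $(r,a)$ is created its ancestor $(q,a)$ must already be present, and because only one cell is created per step, $(q,a)$ was in fact created in a strictly earlier step.

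The main work — and the step I expect to be the real obstacle — is property (f), that the time-stamps of the first-heap cells $(r,1)$ are monotone along the lexicographic partial order $\preceq_{lex}$. I would begin by pinning down exactly which addresses are $\preceq_{lex}$-comparable: unwinding the transitive closure of the two generating relations shows that $r\prec_{lex} r'$ holds precisely when either (i) $r\sqsubset r'$, or (ii) $r=za$ and $r'=zbv''$, where $z$ is a common prefix, $a<b$ are single letters, and $r$ terminates immediately after the branch point. Case (i) is already covered by property (b) of $Q_\sigma$, since an ancestor of $(r',1)$ at depth $1$ is created earlier. For case (ii) the key is the \emph{leftmost-child} rule inside BUMP: because the procedure appends to ``the first $r\in S$'' where appending preserves increasingness, a child vector $V_{rc}$ can be created only after all earlier siblings $V_{rc'}$ with $c'<c$ already exist. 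Thus $V_{za}$ precedes its later sibling $V_{zb}$, which in turn precedes its own descendant $V_{zbv''}=V_{r'}$; chaining these inequalities yields $\tau(za,1)<\tau(zbv'',1)$, exactly as needed.

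The delicate point I would need to nail down is the leftmost-child claim itself: I must argue from the append rule that when $V_{rc}$ is genuinely \emph{created} (not merely extended), every earlier sibling $V_{rc'}$ is already nonempty, ruling out the possibility that some earlier sibling is still empty at that moment. This reduces to observing that appending to an empty vector always succeeds vacuously, so the algorithm would have filled a leftmost empty slot first. Once this is secured, the ancestor relation and the sibling relation together cover all $\preceq_{lex}$-comparable pairs, and property (f) follows, completing the proof that $Q_\sigma$ is a standard heap tableau of the same shape as $P_\sigma$.
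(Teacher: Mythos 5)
Your proof is correct and follows essentially the same route as the paper's: one new cell per insertion yields equal shapes and property (e), the fact that $1,2,\ldots,n$ are recorded in increasing order at positions that are leaves/vector-ends of the current shape yields properties (b) and (c), and the leftmost-creation rule of BUMP yields standardness. If anything, your handling of property (f) --- explicitly unwinding the transitive closure of $\preceq_{lex}$ into the prefix case and the left-sibling-branch case, and deriving the leftmost-child rule from the fact that appending to an empty vector vacuously succeeds --- is more careful than the paper's sketch, which asserts the corresponding facts (including an unneeded claim about the entire level above) without detailed justification.
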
 
\item[(ii). ] One can uniquely identify permutation $\sigma$ from the pair $(P_{\sigma},Q_{\sigma})$. 
\end{itemize} 

\begin{itemize} 
\item[(i). ] The fact that the shape is the same is easy: whenever number $\sigma(i)$ is inserted into $P_{\sigma}$, this table changes by exactly one (filled) position. When $i$ is inserted into $Q_{\sigma}$, the position on which it is inserted is the unique position that was added to 
$P_{\sigma}$: the position of the final insertion after a (perhaps empty) sequence of bumps. Therefore the two heap tableaux have the same shape throughout the process, and at the end of it. 

Let us show now that $Q_{\sigma}$ is a heap tableau. We will show that invariants (b),(c). remain true throughout the insertion process. 

They are, indeed, true at the beginning when $Q_{\sigma}=[1]$. Proving the heap invariant (b). is easy: numbers are inserted into $Q_{\sigma}$ in the order $1,2,\ldots, n$. Each number is, therefore, larger than any number that is an ancestor in its heap. As each number $i$ is inserted as a leaf in its corresponding heap, all heap conditions are still true after its insertion. 

The vector invariant (c). is equally easy: number $i$ is appended to an old vector or starts a new one. The second case is trivial. In the first one $i$ is the largest number inserted so far into $Q_{\sigma}$, therefore the largest in its vector. 

Finally, the fact that $Q_{\sigma}$ is a standard tableau follows from the Algorithm: Schensted-HEAP$_{k}$ starts a new vector from the leftmost position available. Therefore when it starts a new vector, its siblings to the left have acquired a smaller number, as they were already created before that point. Also, when it starts a new vector, all the vectors on the level immediately above have been created (otherwise Schensted-HEAP$_{k}$ would have started a new vector there) and have, thus, acquired a smaller number. 

\item[(ii). ] This is essentially the same proof ideea as that of the Robinson- \\ Schensted correspondence for ordinary Young tableaux: given heap tableaux $P,Q$ with the same shape we will recover the pairs $(n,\sigma(n))$, $(n-1,\sigma(n-1))$, $\ldots, (1,\sigma(1))$ in this backwards order by reversing the sequences of bumps. We will work in the more general setting when $P$ contains $n$ distinct numbers, not necessarily those from $1$ to $n$. On the other hand, since $Q$ is standard, $Q$ will contain these numbers, each of them exactly once. 

The result is easily seen to be true for $n=1,n=2$. From now on we will assume that $n\geq 3$ and reason inductively. 

Suppose $n$ is in vector $V_{\lambda}$ of $Q_{\sigma}$. Then the insertion of $\sigma(n)$ into $P_{\sigma}$ did not provoke any bumps. $\sigma(n)$ is the integer in vector $V_{\lambda}$ of $P_{\sigma}$ sitting in the same position as $n$ does in $Q_{\sigma}$. Suppose, on the other hand, that $n$ is in a different vector of $Q_{\sigma}$. Then $n$ is the outcome of a series of bumps, caused by the insertion of $\sigma(n)$. 

Let $x$ be the integer in $P_{\sigma}$ sitting at the same position as $n$ in $Q_{\sigma}$. Then $x$ must have been bumped from the parent vector in the heap-table by some $y$. $y$ is uniquely identified, as the largest element smaller than $x$ in that vector. There must exist a smaller element in that vector by the heap invariant, so $y$ is well-defined. Now $y$ must have been in turn bumped by some $z$ in the parent vector. We identify $z$ going upwards, until we reach vector $V_{\lambda}$, identifying element $\sigma(n)$. 

\begin{example} 
Consider, for example the case of $n=6$ in Figure~\ref{rs-result}. Element $2$ in $P_{\sigma}$ (sitting in the corresponding position) must have been bumped by 1  in the top row. Therefore $\sigma(6)=1$. 
\end{example}

Now we delete $\sigma(i),i$ from the two heap tableaux and proceed inductively, until we are left with two tables with one element, identifying permutation $\sigma$ this way. 

What allows us to employ the induction hypothesis is the following

\begin{lemma}
Removing the largest element $n$ from a standard heap tableau $T$ yields another standard heap tableau.
\label{red}  
\end{lemma}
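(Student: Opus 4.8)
The plan is to set $n=|dom(T)|$ and exploit standardness condition (e): the value $n$, being the largest, sits in a \emph{unique} cell $(r,a)\in dom(T)$. Write $T'=T\setminus\{(r,a)\}$ for the result of erasing it. The strategy is to first show that $(r,a)$ is an extremal, ``corner-like'' cell — the tail of its vector and a leaf of its column-heap — and then check that deletion preserves each of the defining conditions (a)--(c) together with the standardness conditions (e),(f).

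First I would prove the two extremality facts. For the vector: if $(r,a+1)\in dom(T)$, then columnwise-increase (c) would give $T(r,a+1)>T(r,a)=n$, impossible since $n$ is maximal; hence $a=|V_r|$, i.e.\ $(r,a)$ is the last cell of $V_r$. For the column-heap: if some child $(rc,a)$ with $c\in\Sigma_k$ were occupied, then $\sqsubseteq$-monotonicity (b) forces $T(rc,a)\geq T(r,a)=n$, so $T(rc,a)=n$, contradicting distinctness (e); hence $(r,a)$ is a leaf of the min-heap in column $a$. From these two facts the heap-tableau conditions for $T'$ follow cheaply: the only cells whose presence is ``witnessed'' by $(r,a)$ through the domain-closure halves of (b) and (c) would be an occupied $(rc,a)$ or an occupied $(r,a+1)$, both of which we have just excluded; and the value inequalities in (b),(c) can only be preserved under deletion of an element. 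Thus $T'$ satisfies (a)--(c) and is a heap tableau, and condition (e) is immediate since $T'$ uses exactly the values $1,\dots,n-1$, each once.

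The remaining task is standardness condition (f), and I would split on the column index $a$. If $a>1$, the first heap $H_1=T(\cdot,1)$ is untouched by the deletion, so (f) is inherited verbatim. The crux — and the step I expect to be the main obstacle — is the case $a=1$, where $n$ lies inside $H_1$ and removing it could in principle violate the ``increasing left-to-right, top-to-bottom'' ordering (f) or the domain-closure it implicitly demands. The key observation to push through is that $n$ must occupy the $\leq_{lex}$-maximal occupied address of $H_1$: if some $r'$ with $r\leq_{lex} r'$ were occupied in $H_1$, then (f) applied to $T$ gives $n=T(r,1)\leq T(r',1)$, forcing $T(r',1)=n$ and again contradicting distinctness.

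With that observation in hand, the verification of (f) for $T'$ closes as follows. Take any address $y$ still occupied in $H_1$ after deletion and any $x\leq_{lex} y$. Then $x\neq r$, for otherwise $r\leq_{lex} y$ together with the $\leq_{lex}$-maximality of $r$ would force $y=r$, which has been deleted; hence $x$ remains occupied, and the value inequality $T'(x,1)=T(x,1)\leq T(y,1)=T'(y,1)$ is unchanged. This shows (f) survives, so $T'$ is a standard heap tableau, which is exactly what is needed to run the backward induction in the reconstruction of $\sigma$ from $(P_\sigma,Q_\sigma)$.
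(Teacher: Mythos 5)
Your proof is correct and takes essentially the same route as the paper's: the paper also splits on whether $n$ is alone in its vector (your case $a=1$, where it argues the deleted vector is the last one, with all left siblings and previous levels still nonempty) or not (your case $a>1$, where the set of vectors and their first entries are untouched). Your write-up is in fact more explicit than the paper's, which leaves implicit the extremality facts you verify (that $n$ sits at the end of its vector, at a leaf of its column heap, and at a $\leq_{lex}$-maximal occupied address of $H_1$).
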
 

\begin{proof} 
Suppose $n$ is in a vector of length at least two. Clearly, by removing $n$ all the vectors in the heap remain the same, so the resulting table 
is standard. 

Suppose, therefore, that $n$ is the only element in a vector $V_{\beta}$ of $T$, $\beta=zb$, $b\in \Sigma_{k}$. Since $T$ was standard, all the left sibling vectors $V_{za}$ of $V$ ($a\in \Sigma_{k}, a<b$) are nonempty, and all the vectors on previous levels of $T$ are nonempty.

Removing $V$ preserves these properties (its leftmost sibling becomes the last vector, or the level disappears completely). 

\end{proof}\qed
Completing the proof of Lemma~\ref{red} also completes the proof of Theorem~\ref{rs-heap}. 
\end{itemize} 
\qed

\end{document}